\newtheorem{thm}{Theorem}
\newtheorem{prop}[thm]{Proposition}
\newtheorem{lem}[thm]{Lemma}
\newtheorem{conj}[thm]{Conjecture}
\theoremstyle{definition}
\newtheorem*{rem}{Remark}
\newtheorem*{defn}{Definition}
\newtheorem{ex}[thm]{Example}
\DeclareMathOperator{\Proj}{Proj}
\DeclareMathOperator*{\lcm}{lcm}
\DeclareMathOperator{\inn}{in}
\newcommand{\FF}{\mathcal{F}}
\newcommand{\PP}{\mathbb{P}}
\newcommand{\QQ}{\mathbb{Q}}
\newcommand{\RR}{\mathbb{R}}
\newcommand{\ZZ}{\mathbb{Z}}
\newcommand{\kk}{\Bbbk}
\newcommand{\floor}[1]{\lfloor #1 \rfloor}
\newcommand{\ignore}[1]{}
\def\noqed{\renewcommand{\qedsymbol}{}}
\begin{document}
\title{Canonical rings of $\QQ$-divisors on $\PP^1$}
\author{Evan O'Dorney}
\maketitle
\begin{abstract}
The canonical ring $S_D = \bigoplus_{d \geq 0} H^0(X,\floor{d D})$ of a divisor $D$ on a curve $X$ is a natural object of study; when $D$ is a $\QQ$-divisor, it has connections to projective embeddings of stacky curves and rings of modular forms. We study the generators and relations of $S_D$ for the simplest curve $X = \PP^1$. When $D$ contains at most two points, we give a complete description of $S_D$; for general $D$, we give bounds on the generators and relations. We also show that the generators (for at most five points) and a Gr\"obner basis of relations between them (for at most four points) depend only on the coefficients in the divisor $D$, not its points or the characteristic of the ground field; we conjecture that the minimal system of relations varies in a similar way. Although stated in terms of algebraic geometry, our results are proved by translating to the combinatorics of lattice points in simplices and cones.
\end{abstract}

\section{Introduction}
Let $X$ be a smooth curve over a field $\kk$, and let
\[
  D = \sum_{i=1}^n \alpha_i P_i
\]
be a $\QQ$-divisor, where the $P_i$ are distinct points of $X$. The \emph{canonical ring} of $D$, denoted $S_D$ (or $S$ when there is no ambiguity), is the graded $\kk$-algebra
\[
  \bigoplus_{d \geq 0} u^d H^0(X,\floor{d D})
\]
where it is understood that floors should be taken of the multiplicities of each point. Here $u$ is a dummy variable to keep track of the grading.

There are two motivations for studying such rings. On the purely algebro-geometric side, isomorphisms $X \to \Proj S$ give embeddings into (weighted) projective space canonically determined by $D$. When $D = K$ is a canonical divisor, \emph{Petri's theorem} \cite{SD} states that this embeds most curves $X$ as intersections of quadrics in projective space and classifies the exceptions. Other natural choices for $D$ are a canonical divisor plus a point or a small collection of points (the so-called \emph{log-canonical ring}). When $X$ is a \emph{stacky} curve---a curve with fractional points to be thought of locally as the quotient of a classical curve by a finite group acting with stabilizers---the corresponding canonical divisor contains fractional points $(1 - 1/e_i)P_i$, where the $e_i$ are the orders of the stabilizers. These canonical rings were classified exhaustively in \cite{VZB} and correspond to embeddings of $X$ into weighted projective stacks.

In addition, many of these canonical rings also arise as rings of modular forms. The graded algebra of modular forms for a Fuchsian group $\Gamma \subset \mathrm{PSL}_2(\RR)$ is isomorphic to the stacky canonical ring of the orbifold $\overline{\mathcal{H}}/\Gamma$. Moreover, if all the stabilizer orders $e_i$ are odd, then this surface also admits odd-weight modular forms, which form a ring corresponding to a $\QQ$-divisor with coefficients $(e_i - 1)/(2e_i)$. In some cases, this ring is simpler to understand than its more natural even-weight subring (see e.g. \cite{Mil}, Lemma 6.4). Fractional-weight modular forms correspond to even more exotic $\QQ$-divisors.

In this paper, we study the simplest case $X = \PP^1$ for general $\QQ$-divisors $D$. We denote by $t$ the rational coordinate on $\PP^1$; thus $S$ is a subring of $\kk(t)[u]$. We can at once exclude the cases
\begin{itemize}
  \item $\deg D < 0$, where $S_D = \kk$ is the trivial algebra; and
  \item $\deg D = 0$, where $S_D$ is the polynomial ring in a nonzero element of $H^0(\ell D)$ where $\ell$ is the minimal positive integer such that $\ell D$ is a $\ZZ$-divisor.
\end{itemize}
We will assume that $n \geq 2$; this is no restriction, since we can always add ``ghost points'' with multiplicity $0$, and the $n = 1$ case will turn out fairly easy anyway.

In the following, notation of the form $H^0(L)$ or $H^0(D)$ (for a line bundle $L$ or divisor $D$ on $\PP^1$) is understood to mean $H^0(\PP^1, L)$ or $H^0(\PP^1, O(D))$.

Our main theorems are as follows. When $n = 2$, we describe the ring $S_D$ completely (Theorems \ref{thm:1point} and \ref{thm:2point}) in terms of the rational best lower approximations to the $\alpha_i$, which are computable as convergents to continued fractions. We then show that for general $D$, the degrees of the generators and relations are bounded in terms of the lowest common denominators of the $\alpha_i$, taken $n$ and $n-1$ at a time (Theorem \ref{thm:bounds}). Finally, we ask what aspects of $S_D$ are stable under varying the $P_i$ or the characteristic of the field $\kk$ over which they are defined. We show that a stable generating set exists if
\begin{itemize}
  \item $n \leq 5$ (Theorem \ref{thm:stablegen});
  \item we look only at the generators in degrees $d \geq (2n-2)/\deg D$ (Theorem \ref{thm:xgen}(a));
  \item the $P_i$ stay away from a locus $X_n$ of bad configurations that depends only on $n$ (Theorem \ref{thm:xgen}(b)).
\end{itemize}
We show that the relations, at least as represented by a Gr\"obner basis, are stable if $n \leq 4$ (Theorem \ref{thm:stablegro}), and we conjecture that the minimal basis for the relations is stable under the same conditions that the generators are (Conjecture \ref{conj:min}).

Magma was used to assist in the discovery of theorems and conjectures; the code is included at the end of the .tex file.

\section{General remarks on the generators of $S_D$}
\label{sec:gen}
Denote by $t_i$ the section of $O(1)$ that vanishes only at the point $P_i$ (it is unique up to scaling). Then, for any integers $c_1, \ldots, c_n$ with sum $0$, the monomial $\prod_i t_i^{c_i}$ is a rational function having order of vanishing $c_i$ at $P_i$ and $0$ elsewhere. Our first proposition states that $S$ can be described in terms of such functions.

\begin{prop} \label{prop:mon}
As a $\kk$-vector space, $S$ is spanned by the monomials $u^d t_1^{c_1} t_2^{c_2} \cdots t_n^{c_n}$ satisfying
\begin{equation}
  c_i \geq -d \alpha_i \quad (1 \leq i \leq n) \quad \text{and} \quad \sum_{i} c_i = 0
  \label{eq:cone}
\end{equation}
and has as a basis those monomials which in addition satisfy
\begin{equation}
  c_i = -\floor{d \alpha_i} \quad \text{for} \quad 3 \leq i \leq n.
  \label{eq:edge}
\end{equation}
\end{prop}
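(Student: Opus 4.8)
The plan is to identify $H^0(\PP^1, \floor{dD})$ explicitly and then show the two claimed spanning/basis statements by a dimension count. First I would recall the basic fact that on $\PP^1$, for an integer divisor $E = \sum_i b_i P_i$, the space $H^0(\PP^1, O(E))$ is spanned by the rational functions $\prod_i t_i^{c_i}$ with $\sum_i c_i = 0$ and $c_i \geq -b_i$ for all $i$; indeed such a monomial has divisor $\sum_i c_i P_i \geq -E$, and conversely any rational function with poles bounded by $E$ can be written as a $\kk$-linear combination of these by partial fractions (or by the fact that $\dim H^0(O(E)) = \deg E + 1$ when $\deg E \geq 0$, and $0$ otherwise, matching the count of lattice points below). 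Applying this with $b_i = \floor{d\alpha_i}$, and noting $\floor{d\alpha_i}$ is the largest integer $\leq d\alpha_i$ so that $c_i \geq -\floor{d\alpha_i} \iff c_i \geq -d\alpha_i$ for integers $c_i$, gives exactly the spanning statement \eqref{eq:cone} after multiplying through by $u^d$ and summing over $d \geq 0$.

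For the basis statement, I would fix $d$ and restrict attention to the monomials of that fixed $u$-degree. The claim is that among all integer vectors $(c_1, \ldots, c_n)$ satisfying \eqref{eq:cone}, those additionally satisfying \eqref{eq:edge}, i.e. $c_i = -\floor{d\alpha_i}$ for $3 \leq i \leq n$, form a basis of $H^0(\floor{dD})$. Since the monomials $\prod_i t_i^{c_i}$ with $\sum c_i = 0$ are linearly independent in $\kk(t)$ (they are distinct monomials in the two-variable homogeneous coordinate ring, or distinct elements of the function field once dehomogenized), it suffices to check that the number of $(c_1, c_2)$ with $c_1 \geq -\floor{d\alpha_1}$, $c_2 \geq -\floor{d\alpha_2}$, and $c_1 + c_2 = \floor{d\alpha_3} + \cdots + \floor{d\alpha_n} =: m$ equals $\dim H^0(\floor{dD}) = \max(0, \deg\floor{dD} + 1)$. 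Such pairs are parametrized by $c_1$ ranging over $-\floor{d\alpha_1} \leq c_1 \leq m + \floor{d\alpha_2}$, so their count is $\max(0, m + \floor{d\alpha_1} + \floor{d\alpha_2} + 1) = \max(0, \sum_{i=1}^n \floor{d\alpha_i} + 1) = \max(0, \deg\floor{dD} + 1)$, as desired.

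The main obstacle I anticipate is purely bookkeeping: being careful that ``the monomials satisfying \eqref{eq:cone} span'' genuinely accounts for \emph{all} of $S$ in every degree, including degrees $d$ where $\deg\floor{dD} < 0$ and the graded piece is zero (there the empty set of monomials spans, consistent with the inequalities \eqref{eq:cone} being infeasible). I would also make sure the linear independence claim is stated correctly: two monomials $\prod t_i^{c_i}$ and $\prod t_i^{c'_i}$ with the same $\sum = 0$ are equal as rational functions iff $c_i = c'_i$ for all $i$, which holds because the $P_i$ are distinct points so the $t_i$ are pairwise non-proportional linear forms, hence algebraically independent enough for this. No deep input is needed beyond Riemann–Roch on $\PP^1$ (equivalently, the explicit description of $H^0(O(k))$ on $\PP^1$); the content is entirely the combinatorial translation, which sets up the lattice-point perspective used in the rest of the paper.
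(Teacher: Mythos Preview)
Your overall strategy---fix $d$, count the monomials satisfying \eqref{eq:edge}, and compare with $\dim H^0(\floor{dD})$---is exactly the paper's approach. However, there is a genuine gap in your linear independence step. You assert that ``the monomials $\prod_i t_i^{c_i}$ with $\sum c_i = 0$ are linearly independent in $\kk(t)$,'' and later reinforce this by arguing that distinct exponent vectors give distinct rational functions. Distinctness is true, but linear independence is \emph{false} for $n \geq 3$: since each $t_j$ ($j \geq 3$) is a $\kk$-linear combination of $t_1$ and $t_2$, there are abundant linear relations among such monomials (for instance $t_3 t_1^{-1} - \beta_3 - \gamma_3\, t_2 t_1^{-1} = 0$). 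Indeed, the whole content of the proposition is that the monomials in \eqref{eq:cone} merely \emph{span} $S_d$ and one must pass to the subset \eqref{eq:edge} to get a basis; a blanket linear-independence claim for all of \eqref{eq:cone} would make that distinction vacuous.

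What actually makes the monomials satisfying \eqref{eq:edge} linearly independent is that $c_3,\ldots,c_n$ are \emph{fixed}: each such monomial equals the common factor $t_3^{-\floor{d\alpha_3}}\cdots t_n^{-\floor{d\alpha_n}}$ times $t_1^{c_1} t_2^{c_2}$ with $c_1+c_2$ constant, hence a fixed function times a distinct power of $t_2/t_1$. The paper expresses this by saying the basis elements ``have distinct pole orders at $P_1$.'' Once you replace the incorrect general independence claim with this one-line observation, your dimension count is valid and the proof is complete.
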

\begin{proof}
The conditions \eqref{eq:cone} are clearly necessary and sufficient for a monomial $u^d \prod_i t_i^{c_i}$ to belong to $S$. Thus it is enough to prove that the monomials satisfying \eqref{eq:cone} and \eqref{eq:edge} form a basis for $S$. We can do this one graded piece $S_{d}$ at a time. Write
\[
  \floor{dD}= \sum_i b_i P_i
\]
and let $r = \sum_i b_i$.
Then
\[
  S_{d} = H^0\Big(\sum_i b_i P_i\Big) = \frac{t_1^r}{\prod_i t_i^{b_i}} H^0(r P_1).
\]
It is now sufficient to prove that $\{(t_2/t_1)^j|0 \leq j \leq r\}$ is a basis for $H^0(r P_1)$, which is true: it has $\max\{r+1,0\} = h^0(O(r))$ elements that are linearly independent because they have distinct pole orders at $P_1$.
\end{proof}

Thus, to give generators for $S$, it suffices to find a generating set for the semigroup $\Sigma$ of lattice points inside the cone $C$ of points $(d,c_1,\ldots, c_n) \in \RR^{n+1}$ satisfying \eqref{eq:cone}.

\section{One point}
\label{sec:1point}

\noindent%
\begin{minipage}{0.86\textwidth}
The most natural special case to begin our discussion is when $D = \alpha P$ is supported at only one point, which we may take to be $\infty$. Then each space $H^0(dD)$ takes the form $u^d \cdot H^0(r(\infty)) = u^d \cdot \langle 1, t, t^2, \ldots, t^r \rangle$ where $r = \floor{d \alpha}$. So as a $\kk$-vector space,
\[
  S = \langle t^c u^d \mid 0 \leq c \leq d \alpha \rangle.
\]
\begin{ex}\label{ex:1point}
Take $\alpha = 13/5$. In degree $1$, the functions $f_0 = u$, $f_1 = tu$, and $f_2 = t^2u$ appear immediately and are needed as generators. In degree $2$, the functions $t^c u^2$ for $0 \leq c \leq 4$ are products of the existing $f$'s, but $f_3 = t^5 u^2$ is new. No new generators appear in degrees $3$ and $4$, but $f_4 = t^{13} u^5$ is a generator, the first nonconstant monomial giving equality in the condition $c \leq d \alpha$. No additional generators are needed. The accompanying figure illustrates how the slopes $c/d$ approximate $\alpha$ ever more closely.
\end{ex}
\end{minipage}%
\begin{minipage}{0.14\textwidth}
\centering
{
\psset{unit=0.3cm,linewidth=0.05}
\begin{pspicture}(-0.5,-0.5)(5,13)
\psdots*[dotstyle=square*,dotsize=3pt](0,0)
\psdots*[dotstyle=*,dotsize=3pt](1,0)(1,1)(1,2)(2,5)(5,13)
\psdots*[dotstyle=o,dotsize=3pt]
  (2,0)(2,1)(2,2)(2,3)(2,4)
  (3,0)(3,1)(3,2)(3,3)(3,4)(3,5)(3,6)(3,7)
  (4,0)(4,1)(4,2)(4,3)(4,4)(4,5)(4,6)(4,7)(4,8)(4,9)
  (5,0)(5,1)(5,2)(5,3)(5,4)(5,5)(5,6)(5,7)(5,8)(5,9)(5,10)(5,11)(5,12)
\psline(0,0)(5.25,13.65)
\end{pspicture}
}
\end{minipage}
\begin{defn}
Let $\alpha$ be a real number. A \emph{best lower approximation} to $\alpha$ is a fraction $c/d \leq \alpha$ ($d > 0$) such that there is no fraction $c'/d'$ ($d' > 0$) with
\[
  \frac{c}{d} \leq \frac{c'}{d'} \leq \alpha \quad \text{but} \quad d' < d.
\]
A \emph{best upper approximation} to $\alpha$ is the negative of a best lower approximation to $-\alpha$.
\end{defn}
Note that, by this definition, all integers less than or equal to $\floor{\alpha}$ are best lower approximations. Also, if $\alpha$ is rational, there are but finitely many nonintegral best lower approximations, since the denominator $d$ cannot exceed that of $\alpha$. We will need one nontrivial fact.
\begin{lem}\label{lem:best}
If $c_1/d_1 < c_2/d_2$ are two consecutive best lower approximations, then $c_2d_1 - c_1d_2 = 1$, that is, the two vectors $v_i = (d_i,c_i)$ form a positively oriented $\ZZ$-basis for $\ZZ^2$.
\end{lem}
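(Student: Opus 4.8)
The plan is to induct on the denominator $d_2$ of the larger approximation, exploiting the classical fact that best lower approximations are precisely certain intermediate (mediant) fractions in the Stern--Brocot / continued-fraction sense. The base case is when $c_2/d_2$ is the first nonintegral best lower approximation: then $c_1/d_1 = \floor{\alpha}/1$ and we must check $c_2 \cdot 1 - \floor{\alpha} \cdot d_2 = 1$, i.e. $c_2 = \floor{\alpha} d_2 + 1$. This holds because $c_2/d_2 > \floor{\alpha}$ forces $c_2 \geq \floor{\alpha} d_2 + 1$, while $c_2 \geq \floor{\alpha} d_2 + 2$ would make $(\floor{\alpha} d_2 + 1)/d_2$ a fraction strictly between $c_1/d_1$ and $c_2/d_2$ with the same denominator $d_2$, contradicting consecutiveness (one must rule out that this intermediate fraction already exceeds $\alpha$, which it does not since it is $< c_2/d_2 \leq \alpha$).

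For the inductive step, suppose $c_1/d_1 < c_2/d_2$ are consecutive with $d_2$ minimal among all denominators of best lower approximations exceeding $c_1/d_1$. I would show that $c_2/d_2$ is the mediant $(c_1 + c_0)/(d_1 + d_0)$, where $c_0/d_0$ is the best \emph{upper} approximation to $\alpha$ with smallest denominator among those that are $> c_1/d_1$; equivalently, $v_2 = v_1 + v_0$ where $v_0 = (d_0, c_0)$. Granting the unimodularity $c_1 d_0 - c_0 d_1 = 1$ of the "bracketing" pair (a symmetric statement one proves in tandem, or extracts from the standard theory of the Stern--Brocot tree), we get $c_2 d_1 - c_1 d_2 = (c_1 + c_0)d_1 - c_1(d_1 + d_0) = c_0 d_1 - c_1 d_0 = -( c_1 d_0 - c_0 d_1) $, wait — sign check: we want $c_2 d_1 - c_1 d_2 = c_0 d_1 - c_1 d_0$, and since $c_0/d_0 > c_1/d_1$ means $c_0 d_1 - c_1 d_0 > 0$, this equals $+1$ once the bracketing pair is unimodular. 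The positive orientation of $(v_1, v_2)$ then follows since $\det(v_1, v_2) = d_1 c_2 - c_1 d_2 = 1 > 0$, and unimodularity is exactly the statement that $v_1, v_2$ form a $\ZZ$-basis of $\ZZ^2$.

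Alternatively, and perhaps more cleanly for a self-contained write-up, I would argue directly: let $m = c_2 d_1 - c_1 d_2$, which is a positive integer since $c_1/d_1 < c_2/d_2$. Consider the lattice point $v = v_2 - v_1 = (d_2 - d_1, c_2 - c_1)$ and also $v' = v_2 - k v_1$ for suitable $k$. The key inequality is that the "triangle" with vertices $0, v_1, v_2$ contains no lattice point $(d,c)$ with $c_1/d_1 \le c/d \le c_2/d_2$ and $0 < d < d_2$ other than on the edges — otherwise such a point would be a best lower approximation (or would dominate one) with denominator between $d_1$ and $d_2$, violating consecutiveness. By Pick's theorem or a direct GCD argument, a primitive-looking empty triangle forces $m = 1$; the care needed is to handle the boundary lattice points on the segment from $v_1$ to $v_2$ and to confirm that any such interior/boundary point genuinely yields a best lower approximation strictly between the two given ones. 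I expect this verification — that an "in-between" lattice point of small denominator actually satisfies the best-lower-approximation property, rather than merely lying in the cone — to be the main obstacle, since it requires knowing that nothing with a still-smaller denominator can jump above it without exceeding $\alpha$; this is where the consecutiveness hypothesis and the ordering $c_2/d_2 \le \alpha$ must be used together.
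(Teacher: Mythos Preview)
Your second (``direct'') sketch is essentially the paper's approach, and you have correctly located the one nontrivial step; but you stop short of resolving it, and the first (Stern--Brocot) sketch leans on exactly the kind of external unimodularity statement that the lemma is meant to establish, so as written neither route is complete.

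The paper closes the gap you identify as follows. Assume $(v_1,v_2)$ is not a $\ZZ$-basis, so the angle $\angle v_1 v_2 = \{a_1 v_1 + a_2 v_2 : a_1,a_2 \ge 0\}$ contains a lattice point with $(a_1,a_2)\notin\ZZ^2$. Choose such a point $w=(d,c)$ with $d$ \emph{minimal}. Minimality forces $0 < a_1,a_2 < 1$ (subtracting a $v_i$ would lower $d$; and $a_i=0$ would make $c_i/d_i$ non-reduced), so $c_1/d_1 < c/d < c_2/d_2$; comparing $w$ with $v_1+v_2-w$ gives $d \le (d_1+d_2)/2 \le d_2$. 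Now the key point: $c/d$ \emph{is} a best lower approximation. For if some $c'/d'$ with $d'<d$ satisfied $c/d \le c'/d' \le \alpha$, then $c'/d' < c_2/d_2$ (otherwise $(d',c')$ would beat the best lower approximation $c_2/d_2$, since $d' < d \le d_2$), so $(d',c')$ lies strictly inside $\angle v_1 v_2$ with $d'<d$, contradicting the minimal choice of $w$. Thus $c/d$ is a best lower approximation strictly between the two given ones, contradicting consecutiveness.

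So the missing idea in your sketch is precisely this minimality trick: rather than trying to certify that an arbitrary interior lattice point is a best lower approximation (which, as you suspected, is awkward), you pick the one with smallest $d$ and use that minimality, together with the fact that $c_2/d_2$ is itself a best lower approximation, to rule out any competitor. Your Pick's-theorem framing and the Stern--Brocot induction are both viable alternatives in principle, but each requires more scaffolding than this two-line minimality argument.
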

\begin{proof}
Obviously $(v_1,v_2)$ forms an oriented $\RR$-basis. If it is not a $\ZZ$-basis then there are lattice points in the angle
\[
  \angle v_1 v_2 = \{ a_1 v_1 + a_2 v_2 | a_1,a_2 \geq 0 \}
\]
besides those where $a_1$ and $a_2$ are integers. Let $w = (d,c) = a_1 v_1 + a_2 v_2$ be such a point with $d$ minimal. Then $a_1$ and $a_2$ are both less than $1$; neither of them can be zero (otherwise $c_i/d_i$ would be nonreduced) so $c_1/d_1 < c/d < c_2/d_2$. Also by comparing $w$ with $v_i + v_{i+1} - w$ we see that $d \leq (d_1 + d_2)/2 \leq d_2$. We claim that $c/d$ is a best lower approximation to $\alpha$, contradicting the consecutivity of $c_1/d_1$ and $c_2/d_2$. Suppose otherwise that $c'/d'$ is a better approximation with a smaller denominator. Then $c'/d' < c_2/d_2$ (since $c_2/d_2$ is a best lower approximation) so $c'/d'$ is a point strictly inside $\angle v_1 v_2$, contradicting the choice of $c/d$.
\end{proof}

\begin{thm}\label{thm:1point}
Let $D = \alpha (\infty)$ be a one-point $\QQ$-divisor, and let
\[
  0 = \frac{c_0}{d_0} < \frac{c_1}{d_1} < \cdots < \frac{c_r}{d_r} = \alpha
\]
be the nonnegative best lower approximations to $\alpha$. Then $S$ has a minimal presentation consisting of the $r+1$ generators $f_i = t^{c_i}u^{d_i}$ and $\binom{r}{2}$ relations of the form
\begin{equation} \label{eq:1ptrel}
  g_{ij} = f_i f_j - f_{h_{ij}}^{a_{ij}} \; (i < h_{ij} < j) \quad \text{or} \quad g_{ij} = f_i f_j - f_{h_{ij}}^{a_{ij}} f_{h_{ij}+1}^{b_{ij}} \; (i < h_{ij} < h_{ij}+1 < j)
\end{equation}
for each $(i,j)$ with $j \geq i+2$.
\end{thm}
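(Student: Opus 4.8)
\emph{Setup and generators.} I would work entirely on the combinatorial side. By the discussion opening Section~\ref{sec:1point} (the one‑point case of Proposition~\ref{prop:mon}), $S$ is the semigroup algebra of the pointed affine semigroup $\Sigma = C\cap\ZZ^2$, where $C=\{(d,c)\in\RR^2: d\ge 0,\ 0\le c\le d\alpha\}$ and $t^cu^d\leftrightarrow(d,c)$; thus $S$ has a unique minimal set of algebra generators (the indecomposable elements of $\Sigma$), and presenting $S$ means presenting the toric ideal of that set. The first claim is that the minimal generating set is exactly $\{v_i:=(d_i,c_i):0\le i\le r\}$ (note $r\ge 1$, since $\deg D>0$ forces $\alpha>0$, so $0/1$ is not the only best lower approximation). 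For \emph{spanning}: given $(d,c)\in\Sigma$ with $d\ge 1$, let $i$ be largest with $c_i/d_i\le c/d$; if $i<r$ then $c_i/d_i\le c/d<c_{i+1}/d_{i+1}$, and since $v_i,v_{i+1}$ is a positively oriented $\ZZ$‑basis by Lemma~\ref{lem:best}, the expansion $(d,c)=av_i+bv_{i+1}$ has $a,b\in\ZZ$, forced $\ge 0$ by the slope condition; if $i=r$ then $c/d=\alpha$ and $(d,c)$ is a nonnegative multiple of the primitive vector $v_r$. For \emph{indecomposability}: if $v_k=(d',c')+(d'',c'')$ with both summands nonzero in $\Sigma$, then $d',d''\ge 1$, so $d',d''<d_k$, and $c_k/d_k$ is a weighted mediant of $c'/d',c''/d''$, so one of these lies in $[c_k/d_k,\alpha]$ with denominator $<d_k$, contradicting that $c_k/d_k$ is a (necessarily reduced) best lower approximation.

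\emph{The relations generate.} The geometric key, again from Lemma~\ref{lem:best}, is that the rays $\RR_{\ge 0}v_0,\dots,\RR_{\ge 0}v_r$ cut $C$ into $r$ unimodular cones $\sigma_h=\ZZ_{\ge 0}v_h+\ZZ_{\ge 0}v_{h+1}$ with $\sigma_h\cap\sigma_{h+1}=\ZZ_{\ge 0}v_{h+1}$. Hence the monomials in $x_0,\dots,x_r$ whose support lies in some $\{h,h+1\}$ --- precisely the standard monomials of the ideal $(x_ix_j:j\ge i+2)$ --- map bijectively onto $\Sigma$ and so form a $\kk$‑basis of $S$ (Proposition~\ref{prop:mon}). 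For each pair $j\ge i+2$, the mediant slope of $v_i+v_j$ lies strictly between $c_i/d_i$ and $c_j/d_j$, hence in some $\sigma_h$; taking $h$ largest with $c_h/d_h\le$ that slope, a short determinant computation with Lemma~\ref{lem:best} shows $i<h<j$, that the $v_h$‑coefficient $a$ satisfies $a\ge 1$ (else $h$ was not largest), and that the $v_{h+1}$‑coefficient $b$ can be nonzero only if $h+1<j$; so $g_{ij}:=f_if_j-f_h^af_{h+1}^b$ is a relation of exactly one of the two stated shapes. Each $g_{ij}$ lies in the toric ideal $I=\ker(\kk[x_0,\dots,x_r]\to S)$, and $\mathrm{in}_\prec(g_{ij})=x_ix_j$ for a suitable term order $\prec$ refining the grading $\deg x_i=d_i$ (breaking ties by total degree, then lexicographically --- a tie‑break is needed since $a+b\ge 2$ can equal $2$). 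Thus $(x_ix_j:j\ge i+2)\subseteq\mathrm{in}_\prec I$, with equality by comparing Hilbert functions against the basis just produced; so $\{g_{ij}\}$ is a (reduced) Gr\"obner basis of $I$, in particular a generating set.

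\emph{Minimality of the relations.} Since $\mathrm{in}_\prec I=(x_ix_j:j\ge i+2)$ is minimally generated by $\binom r2$ squarefree monomials, the semicontinuity $\beta_{1,\bullet}(S)\le\beta_{1,\bullet}(\kk[x_0,\dots,x_r]/\mathrm{in}_\prec I)$ under Gr\"obner degeneration already gives $\mu(I)\le\binom r2$. For the reverse inequality I would compute the multigraded first Betti number of $S$ directly: by the standard description via squarefree divisor complexes, $\beta_{1,s}(S)=\dim_\kk\widetilde H_0(\Delta_s)$, where $\Delta_s=\{F\subseteq\{0,\dots,r\}:s-\sum_{i\in F}v_i\in\Sigma\}$, and the goal is to show that for every $s\in\Sigma$ this equals the number of pairs $(i,j)$ with $j\ge i+2$ and $v_i+v_j=s$ --- so that the total is $\binom r2$. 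Concretely, for $s=v_i+v_j$ one checks that $\{i,j\}$ is always an edge of $\Delta_s$ while no edge joins a vertex with index outside $(i,j)$ to one inside, so that $\Delta_s$ splits into the component containing $i$ and $j$, the components made up of vertices with index strictly between $i$ and $j$, and possibly isolated central vertices; counting these against the pairs mapping to $s$ gives the claim.

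I expect this last connectivity analysis of the $\Delta_s$ to be the main obstacle: both the precise count of components of $\Delta_{v_i+v_j}$ (this is where the convexity of the denominators $d_i$ along a chain of best lower approximations is really used) and the vanishing $\beta_{1,s}(S)=0$ for all other $s$. Everything preceding it is bookkeeping with Lemma~\ref{lem:best} and the unimodular subdivision of $C$. (An alternative route to minimality avoids Betti numbers: show no $g_{i_0j_0}$ lies in the ideal generated by the remaining relations by working inside the finite‑dimensional multidegree‑$(v_{i_0}+v_{j_0})$ piece of $I$ --- but this reduces to the same combinatorial fact about which $v_k$ can be ``subtracted'' from $v_{i_0}+v_{j_0}$ while staying in $\Sigma$.)
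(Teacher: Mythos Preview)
Your argument for the generators and for the shape of the relations is essentially the paper's own (the paper treats Theorem~\ref{thm:1point} as the special case $\beta=0$ of the two-point Theorem~\ref{thm:2point}): the same unimodular fan decomposition of $C$ via Lemma~\ref{lem:best}, the same mediant argument for indecomposability, and the same cross-product check that $v_i+v_j\in\angle v_{i+1}v_{j-1}$. Your Gr\"obner / Hilbert-function packaging of ``the $g_{ij}$ generate $I$'' is a mild variant of the paper's direct reduction (repeatedly apply $g_{ij}$ to the extreme pair of indices until the support lies in some $\{h,h+1\}$); both rest on the same bijection between the ``standard'' monomials and $\Sigma$. One small wrinkle: the term order you sketch does not obviously make $x_ix_j$ the leading term when $a+b>2$; plain lex with $x_0>\cdots>x_r$ already does, since the exponent of $x_i$ distinguishes the two terms.

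Where you genuinely diverge from the paper is in proving minimality of the $\binom r2$ relations, and there you are working far harder than necessary. The paper's argument is one line: no monomial occurring in any $g_{i'j'}$ with $(i',j')\neq(i,j)$ divides $x_ix_j$. Indeed, the term $x_{i'}x_{j'}$ is a distinct squarefree quadratic; the other term has support $\{h\}$ or $\{h,h+1\}$ with $i'<h$, and for it to divide the squarefree $x_ix_j$ one would need that support contained in $\{i,j\}$ with every exponent at most $1$, forcing the second shape with $\{h,h+1\}=\{i,j\}$, i.e.\ $j=i+1$, contrary to $j\ge i+2$. Hence $g_{ij}$ cannot lie in the ideal generated by the remaining relations, since the monomial $x_ix_j$ could never be produced. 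This completely replaces your proposed $\Delta_s$ connectivity analysis and the Betti-number semicontinuity bound; the obstacle you flag as the main difficulty is simply not there.
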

\begin{rem}
A quadratic $f_{i+1} f_{i-1}$ whose indices differ by exactly $2$ has, according to the theorem, an expression as a monomial $f_i^{a_i}$ involving $f_i$ alone. If we assemble these exponents (which, by monotonicity of the $c$'s and $d$'s, are at least $2$) into a ``minus continued fraction''
\[
  \cfrac{1}{d_1 - \cfrac{1}{a_1 - \cfrac{1}{a_2 - \cfrac{1}{\ddots - \cfrac{1}{a_{r-1}}}}}}
\]
then it is an easy induction to prove that the convergents are the $c_i/d_i$ and hence that the entire continued fraction is the (unique) minus continued fraction expansion of $\alpha$.
\end{rem}
\begin{rem}
We can also consider the case when $\alpha$ is irrational; then $S_D$ will have infinitely many generators corresponding to the convergents of the corresponding nonterminating minus continued fraction expansion. Since, in general, the canonical ring of an $\RR$-divisor is simply an increasing union of canonical rings of $\QQ$-divisors, we will not speak much of the $\RR$-divisor case.
\end{rem}
We do not prove this theorem here, since it is a special case of the theorem for a two-point divisor to be stated in the next section (Theorem \ref{thm:2point}).

\section{Two points}
\label{sec:2points}
\begin{minipage}{0.86\textwidth}
This case is closely related to the preceding. Here $D$ is supported at two points, which we may take to be $P_1 = \infty$ and $P_2 = 0$, and hence instead of lying in $\kk[t,u]$ as before, the canonical ring resides in the Laurent polynomial ring $\kk[t,1/t,u]$ as the span of the monomials
\[
  \langle t^c u^d \mid -\alpha_2 d \leq c \leq \alpha_1 d \rangle
\]
lying in a fixed angle.

\begin{ex}
Building off of the previous example, we take $D = (13/5)(\infty) - (1/4)(0)$ and note how, in addition to the generators $tu$, $t^2u$, $t^5 u^2$, $t^{13} u^5$ approaching $13/5$ from below, we need a family $tu^2$, $tu^3$, $tu^4$ of generators approaching $1/4$ from above.
\end{ex}

\end{minipage}%
\begin{minipage}{0.14\textwidth}
\centering
{
\psset{unit=0.3cm,linewidth=0.05}
\begin{pspicture}(-0.5,-0.5)(5,13)
\psdots*[dotstyle=square*,dotsize=3pt](0,0)
\psdots*[dotstyle=*,dotsize=3pt](1,1)(1,2)(2,5)(5,13)(2,1)(3,1)(4,1)
\psdots*[dotstyle=o,dotsize=3pt]
  (2,2)(2,3)(2,4)
  (3,2)(3,3)(3,4)(3,5)(3,6)(3,7)
  (4,2)(4,3)(4,4)(4,5)(4,6)(4,7)(4,8)(4,9)
  (5,2)(5,3)(5,4)(5,5)(5,6)(5,7)(5,8)(5,9)(5,10)(5,11)(5,12)
\psline(0,0)(5.25,13.65)
\psline(0,0)(5.6,1.4)
\end{pspicture}
}
\end{minipage}
\begin{thm}\label{thm:2point}
Let $D = \alpha P + \beta Q$ be a two-point $\QQ$-divisor, where $\alpha + \beta \geq 0$. Let $c_0/d_0$ be a rational number between $-\beta$ and $\alpha$ (inclusive) of minimal denominator. Let
\[
  \frac{c_0}{d_0} < \frac{c_1}{d_1} < \cdots < \frac{c_r}{d_r} = \alpha
\]
be the subsequent best lower approximations of $\alpha$, and let
\[
  \frac{c_0}{d_0} > \frac{c_{-1}}{d_{-1}} > \cdots > \frac{c_{-s}}{d_{-s}} = -\beta
\]
be the best \emph{upper} approximations of $-\beta$ following $c_0/d_0$. Then $S$ has a minimal presentation consisting of the $r+s+1$ generators $f_i = t^{c_i}u^{d_i}$ (where $t$ is a rational coordinate on $\PP^1$ with a pole at $P$ and a zero at $Q$) and $\binom{r}{2}$ relations of the form
\begin{equation} \label{eq:2ptrel}
  g_{ij} = f_i f_j - f_{h_{ij}}^{a_{ij}} \; (i < h_{ij} < j) \quad \text{or} \quad g_{ij} = f_i f_j - f_{h_{ij}}^{a_{ij}} f_{h_{ij}+1}^{b_{ij}} \; (i < h_{ij} < h_{ij}+1 < j)
\end{equation}
for each $(i,j)$ with $j \geq i+2$.
\end{thm}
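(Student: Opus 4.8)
The plan is to carry out the translation into lattice-point combinatorics promised after Proposition~\ref{prop:mon}. Taking $P=\infty$, $Q=0$ and writing $v_m=(d_m,c_m)$, the monomial $f_m=t^{c_m}u^{d_m}$ corresponds to the lattice point $v_m$ of the $2$-dimensional cone $C=\{(d,c):-\beta d\le c\le\alpha d\}\subset\RR^2$, whose extreme rays have slopes $-\beta$ and $\alpha$. A generating set for $S$ is the same thing as a generating set for the semigroup $\Sigma=C\cap\ZZ^2$, and the relations among the $f_m$ are the toric ideal $I=\ker(\kk[x_{-s},\dots,x_r]\to S)$, spanned by binomials $x^{\mathbf a}-x^{\mathbf b}$ with $\sum a_mv_m=\sum b_mv_m$. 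I grade $\kk[x_{-s},\dots,x_r]$ by $\Sigma$, putting $\deg x_m=v_m$, so that $I$ and $\mathfrak m=(x_{-s},\dots,x_r)$ are $\Sigma$-homogeneous.

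\emph{Generators.} I would show the $v_m$ ($-s\le m\le r$) are the Hilbert basis of $\Sigma$. Each $v_m\in\Sigma$ since $c_m/d_m\in[-\beta,\alpha]$. The essential fact is that consecutive $v_m,v_{m+1}$ form a positively oriented $\ZZ$-basis: for $m\ge0$ this is Lemma~\ref{lem:best}; for $m\le-1$ it follows from the same lemma applied to $\beta$, whose best lower approximations are the negatives of our $c_m/d_m$ with $m\le0$; and these mesh at index $0$ because $c_0/d_0$, the minimal-denominator fraction in $[-\beta,\alpha]$, is simultaneously a best lower approximation of $\alpha$ and a best upper approximation of $-\beta$. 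As the slopes $c_{-s}/d_{-s}<\dots<c_r/d_r$ run from $-\beta$ to $\alpha$, the sectors $\angle v_mv_{m+1}$ tile $C$, and by unimodularity every lattice point of $C$ is a nonnegative integer combination of the two $v_m$ spanning its sector; so the $v_m$ generate $\Sigma$. For minimality, the two unimodularity relations at an interior $v_m$ give $v_{m-1}+v_{m+1}=a_mv_m$ with $a_m$ a positive integer, in fact $a_m\ge2$ (a quick check using that the $d_m$ decrease for $m\le0$ and increase for $m\ge0$, so $v_m=v_{m-1}+v_{m+1}$ is impossible); hence the path $v_{-s}\to\dots\to v_r$ is convex toward the origin and forms the inner boundary of $\operatorname{conv}(\Sigma\setminus\{0\})$, and no lattice point $v_m$ on that boundary is a sum of two nonzero points of $\Sigma$ (a supporting functional would have to vanish at it, forcing $v_m$ onto an extreme ray, where it is primitive). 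Since the $f_m$ are distinct, hence linearly independent, Laurent monomials, a minimal generating set of $\Sigma$ yields a minimal generating set of $S$.

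\emph{Relations.} For $i<j$ with $j\ge i+2$, the lattice point $v_i+v_j$ has slope strictly between those of $v_i$ and $v_j$, so lies in a unique sector $\angle v_hv_{h+1}$ with $i<h$ and $h+1\le j$; expanding $v_i+v_j=av_h+bv_{h+1}$ (with $a\ge1$, and — by a slope/primitivity check — $b=0$ whenever $h+1=j$, $b\ge1$ only when $h+1<j$) gives the relation $g_{ij}=x_ix_j-x_h^ax_{h+1}^b$, exactly of the two shapes in \eqref{eq:2ptrel}. To see the $g_{ij}$ (one per pair $(i,j)$ with $j\ge i+2$, hence $\binom{r+s}{2}$ of them) generate $I$, I would use the standard-monomial method: call $x_m^ax_{m+1}^b$ \emph{standard}; standard monomials biject with $\Sigma$ (by the cone decomposition and unimodularity); and modulo $J:=(g_{ij})$ the rewrite $x_ix_j\mapsto x_h^ax_{h+1}^b$ reduces every monomial to the standard one of the same $\Sigma$-degree, so $\dim_{\kk}(\kk[x]/J)_v\le1=\dim_{\kk}S_v$, forcing $J=I$. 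The one delicate point is that the rewriting terminates; I would certify this with the lexicographic potential $\Phi(\textstyle\prod x_m^{e_m})=\big(-\sum_m e_m,\ \sum_m e_m(d_m^2+c_m^2)\big)$. Since the $v_m$ are indecomposable, $a+b\ge2$, so the first coordinate never increases, and when $a+b>2$ it strictly drops. When $a+b=2$ the second coordinate strictly drops: if $v_i+v_j=2v_h$ this is the parallelogram law; in the genuinely tricky case $v_i+v_j=v_h+v_{h+1}$ — which really occurs, e.g. for $D=\tfrac{k}{k+1}(\infty)$ — the points $v_h,v_{h+1}$ are pinned to the chord $[v_i,v_j]$, because their common midpoint lies on that chord while reflecting a point of the origin-facing boundary arc through a point of a chord of $\operatorname{conv}(\Sigma\setminus\{0\})$ moves it to the far side, contradicting that $v_{h+1}$ too lies on that arc; hence $|v_h-v_{h+1}|<|v_i-v_j|$ and the claim again follows from the parallelogram law. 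As $C$ is pointed, each fiber of $\Sigma$ is finite, so $\Phi$ takes finitely many values there and the rewriting cannot loop.

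\emph{Minimality of the relations} is then automatic: $I$ has no nonzero constant or linear element (the $f_m$ being linearly independent), so every element of $\mathfrak mI$ has all terms of degree $\ge3$, whereas a nonzero $\kk$-combination $\sum c_{ij}g_{ij}$ has nonzero degree-$2$ part $\sum c_{ij}x_ix_j$ (no $x_ix_j$ with $j\ge i+2$ equals an $x_h^2$ or an $x_hx_{h+1}$). Thus the classes $[g_{ij}]$ are $\kk$-linearly independent in $I/\mathfrak mI$ and, being a generating set, form a basis of it, so $\{g_{ij}\}$ is a minimal generating set of $I$. I expect the termination of the rewriting — in particular spotting and handling the colinear configuration $v_i+v_j=v_h+v_{h+1}$ — to be the only genuinely delicate step; the rest is bookkeeping with continued-fraction convergents and $2\times2$ determinants.
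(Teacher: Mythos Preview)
Your proof is correct and follows essentially the same route as the paper: translate to the semigroup $\Sigma=C\cap\ZZ^2$, use Lemma~\ref{lem:best} to get the unimodular fan $\bigcup_m\angle v_mv_{m+1}$, read off the Hilbert basis, and then produce one binomial relation per pair $(i,j)$ with $j\ge i+2$.

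The one place your argument is substantially heavier than the paper's is the termination of rewriting. The paper first proves the slightly sharper fact $v_i+v_j\in\angle v_{i+1}v_{j-1}$ (via $(v_i+v_j)\wedge v_{i+1}=1-v_{i+1}\wedge v_j\le 0$ and its mirror), so that the rewrite $f_if_j\to f_h^af_{h+1}^b$ always lands with indices strictly inside $(i,j)$. Termination is then immediate using the lexicographic potential $(\text{largest index present},\,\text{its exponent})$: apply $g_{ij}$ to the extreme pair and either the top index drops or its multiplicity drops. This bypasses your parallelogram-law potential and the convex-geometry argument for the ``colinear'' case $v_i+v_j=v_h+v_{h+1}$ entirely; that case is not delicate once you know $h+1\le j-1$. (Incidentally, your ``unique sector with $i<h$'' is not literally true on a ray; what you really want is that one \emph{may} choose $h\ge i+1$, which follows from the same cross-product inequality.)

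Two minor remarks: your count $\binom{r+s}{2}$ is correct (the statement's $\binom{r}{2}$ is a typo carried over from the one-point theorem), and your minimality argument for the generators via a supporting functional is a clean alternative to the paper's mediant argument.
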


\begin{proof}
By construction, $c_0/d_0$ is both a best lower approximation to $\alpha$ and a best upper approximation to $\beta$.

Obviously $f_0$ is a minimal generator. If $f_i$ were not minimal for some $i > 0$, then (since $S$ is spanned by monomials) there would be two nonconstant elements $t^{c'}u^{d'}, t^{c''}u^{d''} \in S$ whose product is $f_i = t^{c_i}u^{d_i}$. Clearly both $d'$ and $d''$ are less than $d_i$. Now $c_i/d_i$ is the mediant of two fractions $c'/d'$, $c''/d''$ not exceeding $\alpha$, one of which must be at least $c_i/d_i$, contradicting the definition of best lower approximation. The proof for $i < 0$ is symmetric.

To show that no further generators are necessary, let $v_i$ be the lattice vector $(d_i,c_i) \in \ZZ^2$. Denoting by $\angle v_i v_j$ the set of all real points inside or on the sides of the indicated angle, we have a decomposition
\[
  \angle v_{-s} v_r = \angle v_{-s} v_{-s+1} \cup \angle v_{-s+1} v_{-s+2} \cup \cdots \cup v_{r-1} v_r.
\]
By Lemma \ref{lem:best}, all the lattice points in $\angle v_i v_{i+1}$ are nonnegative linear combinations of $v_i$ and $v_{i+1}$. Each monomial $t^c u^d \in S$ lies in (at least) one of these angles and hence has an expression of the form $f_h^{x} f_{h+1}^{y}$ where $x$ and $y$ are nonnegative integers, establishing the generation claim. In particular, if $j - i \geq 2$, then the product $f_i f_j$ has this form for some $h$, $i \leq h < j$ (since $v_i + v_j \in \angle v_iv_j$), giving the relations $g_{ij}$. Each of them is minimal since no other of them contains a term dividing $f_i f_j$.

As promised in the theorem, we show that $f_i$ and $f_j$ themselves do not appear in the expansion of $f_i f_j$; in other words, $v_i + v_j \in \angle v_{i+1}v_{j-1}$. Since $\{v_i, v_{i+1}\}$ is a positively oriented $\ZZ$-basis, the cross product $v_i \wedge v_{i+1}$ is $1$; also $v_{i+1} \wedge v_j = m$ is a positive integer. Then $(v_i + v_j) \wedge v_{i+1} = 1-m \leq 0$ so $v_i + v_j$ does not lie on the same side of $v_{i+1}$ as $v_i$. Symmetrically $v_i + v_j$ does not lie on the same side of $v_{j-1}$ as $v_j$.

Now, given any monomial in the $f_i$'s involving indices differing by at least $2$ can be transformed, by applying the relation $g_{ij}$ on the highest and lowest $f$'s, to a monomial in which the highest index either drops or is preserved with a lower exponent. Iterating this process eventually produces the canonical form in which indices differ by at most $1$. Thus the $g_{ij}$'s generate all relations among the $f_i$'s.
\end{proof}

\section{Bounds}
\label{sec:bounds}
For $n \geq 3$, the canonical ring of a divisor $D = \sum_{i=1}^n \alpha_i P_i$ is no longer generated by monomials in $t$ and $u$ and cannot be described as explicitly as in the foregoing cases. Moreover, the generators and relations can occur in rather high degrees, as the next example shows.
\begin{ex}\label{ex:235}
Take $D = -\frac{1}{2}P_1 + \frac{1}{3}P_2 + \frac{1}{5}P_3$, so $\deg D = 1/30 > 0$. For $1 \leq d \leq 5$, $\floor{dD}$ has negative degree and hence no elements appear. At $d=6$, $d=10$, and $d=15$, $\deg \floor{dD}$ rises to $0$, yielding three generators which we denote respectively by $x$, $y$, $z$. Up to degree $29$, there are no further elements apart from the monomials $x^2$, $xy$, etc., which also cause $\deg \floor{dD}$ to rise to $0$. Then, at degree $30$, the three monomials $x^5$, $y^3$, and $z^2$ must be squeezed into a $2$-dimensional vector space, yielding a relation $ax^5 + by^3 + cz^2 = 0$. By comparing the pole orders of the three terms at $P_1$, $P_2$, and $P_3$ one can check that $a$, $b$, and $c$ are all nonzero. Moreover, all degrees of $S$ having dimension at least $2$ have the form $H^0((30+d)D)=H^0(30D)\cdot H^0(dD)$ from which one can deduce that there are no more generators or relations.
\end{ex}

Nevertheless, we can bound the degrees of the generators and relations needed.
\begin{thm} \label{thm:bounds}
Write $\alpha_i = p_i/q_i$ in lowest terms. Let
\[
  \ell = \lcm_j q_j \quad \text{and} \quad \ell_i = \lcm_{j \neq i} q_j.
\]
Then $S$ is generated in degrees less than $\sum_i \ell_i$, with relations in degrees less than
\[
  \max \Big\{\ell + \sum_{i}{\ell_i}, 2\sum_{i}{\ell_i}\Big\}.
\]
\end{thm}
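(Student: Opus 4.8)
The plan is to reduce everything to the combinatorics of the cone $C \subseteq \RR^{n+1}$ defined by \eqref{eq:cone}, using Proposition~\ref{prop:mon} to connect $S_D$ with the semigroup $\Sigma = C \cap \ZZ^{n+1}$. The key structural feature to exploit is that $C$ is a cone over a simplex: projecting away the $c_i$ coordinates (keeping $d$ and using $\sum c_i = 0$ to eliminate one of them), the slice $d = $ const is an $(n-1)$-dimensional polytope, and a point of $C$ with $d$-coordinate a multiple of $\ell$ is an honest lattice point whose floors are exact. So I expect the generators to be controlled by the first few multiples of $\ell$, and more precisely by the "gaps" one has to climb to reach a lattice point where the semigroup becomes well-behaved.

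For the \textbf{generator bound}, I would argue as follows. Suppose $m = u^d t_1^{c_1}\cdots t_n^{c_n} \in S$ has $d \geq \sum_i \ell_i$; I want to write $m$ as a product of two nonconstant elements of $S$. The idea is a "carving" argument: for each $i$, one can peel off a piece of $S$ living near the facet $c_i = -d\alpha_i$ whose "width" in the $d$-direction is $\ell_i$ (because $\ell_i$ is the period with which the facet $c_i = -d\alpha_i$ meets the lattice in the other coordinates — i.e. $\ell_i \alpha_j \in \ZZ$ for all $j \neq i$). Concretely: given $m$ with $d \geq \sum \ell_i$, choose, for the index $i_0$ achieving $c_{i_0} + d\alpha_{i_0}$ minimal (the "tightest" facet), a decomposition $d = d' + d''$ with $d'' = \ell_{i_0}$, and split $c_i = c_i' + c_i''$ accordingly, forcing $c_i'' = -\lfloor d'' \alpha_i\rfloor$ for $i \neq i_0$ (which equals $-d''\alpha_i$ when $i \neq i_0$, by choice of $\ell_{i_0}$) and letting $c_{i_0}''$ absorb the rest. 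One checks both factors satisfy \eqref{eq:cone}. Iterating — or rather, just doing this once and inducting on $d$ — shows every monomial of degree $\geq \sum \ell_i$ is decomposable, so generators live strictly below $\sum_i \ell_i$. I expect the only subtlety is verifying the inequalities $c_i' \geq -d'\alpha_i$ and $c_i'' \geq -d''\alpha_i$ simultaneously, which comes down to bookkeeping with floors and the fact that $d''\alpha_i$ is an integer for the indices where it matters.

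For the \textbf{relation bound}, the strategy is the standard one: pick generators $f_1,\dots,f_N$ (all in degrees $< \sum \ell_i$ by the first part) and show that the kernel of $\kk[F_1,\dots,F_N] \to S$ is generated in degrees below the stated bound. A relation is a difference of two monomials $M - M'$ in the $F$'s mapping to the same element of $S$; I want to rewrite $M$ step by step, changing one factor at a time, staying inside the ideal, until I reach a "normal form." The two regimes in the bound $\max\{\ell + \sum\ell_i,\, 2\sum\ell_i\}$ should correspond to two phases: once the $d$-degree of (a tail of) the monomial exceeds $\ell$, the corresponding graded piece of $S$ is "saturated" in the sense that $S_d = S_\ell \cdot S_{d-\ell}$ for $d$ large — an $H^0$ multiplication statement on $\PP^1$ — so any high-degree relation is a consequence of lower ones plus relations of degree $\leq \ell + \sum\ell_i$; and separately, two generators multiplied together have total degree $< 2\sum\ell_i$, so the "obvious" relations $F_iF_j - (\text{rewrite})$ all live below $2\sum\ell_i$. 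Assembling these — a Gröbner-style rewriting that terminates because each step decreases some monomial order, combined with the saturation statement to handle the high-degree tail — gives the bound.

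The \textbf{main obstacle} I anticipate is the relation bound, specifically making the rewriting argument terminate while keeping every intermediate relation within the degree budget: when I replace a subproduct $F_iF_j$ by its expansion in other generators, I must know that expansion has degree $\deg F_i + \deg F_j < 2\sum\ell_i$ and that the substitution genuinely decreases the chosen monomial order, and these two demands can pull against each other. The clean way around this is probably to prove the saturation statement $S_{d} = S_{e}\cdot S_{d-e}$ for $e \geq \ell$, $d - e \geq \sum\ell_i$ first (it follows from $h^0$ of divisors on $\PP^1$ being "as large as possible" once the degree is nonnegative, together with the generator bound already in hand), and then feed it into a one-line induction on the degree of the relation rather than a hands-on monomial rewriting. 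The one-point and two-point cases (Theorems~\ref{thm:1point}, \ref{thm:2point}) should serve as sanity checks: there $\ell_i = 1$ for each $i$ when $n = 2$, and indeed all relations there are quadratic or slightly more, consistent with the bound.
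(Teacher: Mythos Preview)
Your generator argument is essentially the paper's: the factor $m''$ you peel off, with $d'' = \ell_{i_0}$ and $c_j'' = -\ell_{i_0}\alpha_j$ for $j \neq i_0$, is exactly the extremal ray generator $e_{i_0}$ of the cone $C$, and the paper likewise shows that any lattice point with $d \geq \sum_i \ell_i$ lies outside the fundamental parallelepiped spanned by the $e_i$ and hence admits such a subtraction. There is, however, a slip in your choice of $i_0$. In the $a$-coordinates (where $m = \sum_i a_i e_i$), the slack $c_i + d\alpha_i$ equals $a_i\,\ell_i \deg D$, so picking $i_0$ with \emph{minimal} slack picks a coordinate with small $a_{i_0}$, and subtracting $e_{i_0}$ may send it negative. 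You want the opposite choice: some $i_0$ with $a_{i_0} \geq 1$, which is guaranteed to exist precisely when $d = \sum_i a_i \ell_i \geq \sum_i \ell_i$. With that correction the argument goes through.

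The relation argument has a more serious gap. You write that ``a relation is a difference of two monomials $M - M'$ in the $F$'s mapping to the same element of $S$,'' and your rewriting plan treats the relation ideal as if it were generated by binomials. This is true only for $n \leq 2$. For $n \geq 3$ the monomials $u^d t_1^{c_1}\cdots t_n^{c_n}$ spanning $S_d$ are genuinely linearly dependent over $\kk$ (since each $t_j$ is a $\kk$-linear combination of $t_1$ and $t_2$), so the kernel contains trinomials and worse, and no amount of monomial rewriting will produce them. The paper handles exactly this by factoring the presentation map as
\[
\kk[x_1,\ldots,x_N] \xrightarrow{\ \chi\ } \kk[\Sigma] \xrightarrow{\ \psi\ } S,
\]
where $\kk[\Sigma]$ is the semigroup ring. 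The kernel of $\chi$ \emph{is} toric and is generated in degrees $< 2\sum_i \ell_i$ by the rewriting you describe; but $\ker\psi$ is spanned by the relations $(y_j - \beta_j y_1 - \gamma_j y_2)\cdot(\text{monomial})$ encoding the linear dependence of $t_j$ on $t_1,t_2$, and these are what produce the other term $\ell + \sum_i \ell_i$ in the bound (the monomial cofactors are parametrized by lattice points in a translate $C' = C + \epsilon$ of the cone, with $d(\epsilon) = 1/\deg D \leq \ell$). Your saturation statement $S_d = S_e S_{d-e}$ is related in spirit, but on its own does not separate these two kinds of relations or give control over the non-binomial part of the kernel.
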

\begin{rem}
These bounds are tight at least up to a factor of $2$; to see this, consider the example
\[
  D = \frac{p_1}{2q_1} P_1 + \cdots + \frac{p_n}{2q_n} P_n
\]
where the $q_i$ are pairwise coprime odd integers and the $p_i$ are adjusted so that $\deg D = 1/\ell = 1/\big(2\prod_i q_i\big)$. Here (by an analysis like that of Example \ref{ex:235}), there are minimal generators in degrees $\ell_i$ and $\frac{1}{2}\sum_i \ell_i$ and relations in degrees $\sum_i\ell_i$ and $\ell$. However, in this example $\deg D = 1/\ell$ is very close to $0$; perhaps an even tighter bound can be found in terms of $1/\deg D$ and the $q_i$ (a generator in degree $q_i$ not vanishing at $P_i$ is clearly needed).
\end{rem}
\begin{proof}[Proof of the generator bound]
As has been previously mentioned, to find a generating set for $S$, it is enough to find a generating set for the semigroup $\Sigma$ of lattice points lying in the cone $C$ defined by the inequalities
\[
  c_1 \geq -d\alpha_1, \quad \ldots, \quad c_n \geq -d\alpha_n.
\]
Denote by $e_i$ the minimal lattice point on the ray in which all but the $i$th of these inequalities becomes equality; since $d$ must be divisible by all the $q_j$ where $j \neq i$, we find the coordinates of $e_i$ to be
\begin{align*}
  d &= \ell_i, \\
  c_j &= -\alpha_j \ell_i \quad (j \neq i), \\
  c_i &= -\sum_{j\neq i} c_j = (\deg D - \alpha_i) \ell_i
\end{align*}
Note that the $e_i$ are linearly independent ($c_i + d\alpha_i$ is a linear functional vanishing on all but one) and $C$ is the set of points expressible as $a_1 e_1 + \ldots + a_n e_n$ with coordinates $a_i \geq 0$. The $\ZZ$-points under the old coordinates map to a lattice $\Lambda$, $\ZZ^n \subseteq \Lambda \subset \QQ^n$, in the $a$-coordinates, such that $\Sigma$ maps to $\Lambda \cap \ZZ_{\geq 0}^n$. We seek a system of generators for $\Sigma$; we may take the basis vectors $e_i$ plus all nonzero $\Lambda$-points $\lambda_j = \sum_i a_{ji} e_i$ lying in the fundamental cube $0 \leq a_1,\ldots,a_n < 1$.

The degrees of these generators can be found by converting back to the original coordinates: we have $d(e_i) = \ell_i$ and hence
\[
  d(\lambda_j) = \sum_i a_{ji} \ell_i < \sum_i \ell_i
\]
as desired.
\end{proof}

\begin{rem}
We have just proved that $S$ is finitely generated. This proof gives a bound on not only the degrees of the generators but also their number. The number of $\Lambda$-points in the fundamental cube is simply its volume in the coordinates $(d,c_1,\ldots,c_{n-1})$ (the $c_n$-coordinate being redundant), which is the absolute value of the determinant
\begin{align*}
  \begin{vmatrix}
  e_1 \\ e_2 \\ \vdots \\ e_n
  \end{vmatrix}
  &= \ell_1 \ell_2 \cdots \ell_n
  \begin{vmatrix}
  1 & \deg D - \alpha_1 & -\alpha_2 & \cdots & -\alpha_{n-1} \\
  1 & -\alpha_1 & \deg D - \alpha_2 & \cdots & -\alpha_{n-1} \\
  \vdots & \vdots & \vdots & \ddots & \vdots \\
  1 & -\alpha_1 & -\alpha_2 & \cdots & \deg D - \alpha_{n-1} \\
  1 & -\alpha_1 & -\alpha_2 & \cdots & -\alpha_{n-1} \\
  \end{vmatrix} \\
  &= \ell_1 \ell_2 \cdots \ell_n
  \begin{vmatrix}
  0 & \deg D & 0 & \cdots & 0 \\
  0 & 0 & \deg D & \cdots & 0 \\
  \vdots & \vdots & \vdots & \ddots & \vdots \\
  0 & 0 & 0 & \cdots & \deg D \\
  1 & -\alpha_1 & -\alpha_1 & \cdots & -\alpha_{n-1} \\
  \end{vmatrix} \\
  &= \pm \ell_1 \ell_2 \cdots \ell_n (\deg D)^{n-1}.
\end{align*}
Removing the point $0$ and adding the generators $e_1,\ldots, e_n$ shows that $S$ is generated by at most
\[
  n-1 + \ell_1 \ell_2 \cdots \ell_n (\deg D)^{n-1}
\]
elements.
\end{rem}

\begin{proof}[Proof of the relation bound]
Returning to the theorem, we now have generators $g_1,\ldots,g_N$ for $S$ having the forms of monomials $g_i = u^{d_i} t_1^{c_{i1}} \cdots t_n^{c_{in}}$ whose corresponding lattice points generate $\Sigma$. We ask for the relations among them, that is, generators of the kernel of the induced map
\[
  \phi : \kk[x_1, \ldots, x_N] \rightarrow S.
\]
(If any of the $g_i$'s happens to be redundant, its inclusion clearly cannot decrease the degrees of the relations.) It is convenient to factor $\phi$ as a certain composition. Consider the semigroup ring
\[
  \kk[\Sigma] = \langle u^d y_1^{c_1} \ldots y_n^{c_n}
  \mid c_i \geq -d\alpha_i \text{ and } \sum_i c_i = 0 \rangle
\]
lying within the Laurent polynomial ring $\kk[u,y_1,y_1^{-1},\ldots, y_n,y_n^{-1}]$. Then $\phi$ factors as the composition of the maps
\begin{align*}
  \chi : \kk[x_1, \ldots, x_N] &\to \kk[\Sigma] \\
  x_i &\mapsto u^{d_i}y_1^{c_{i1}}y_2^{c_{i2}}\cdots y_n^{c_{in}}
\end{align*}
and
\begin{align*}
  \psi : \kk[\Sigma] &\to S \\
  u^d y_1^{c_1} \ldots y_n^{c_n} &\mapsto u^d t_1^{c_1} \ldots t_n^{c_n}.
\end{align*}
Since $\chi$ is a surjection, we have the natural exact sequence
\[
  0 \to \ker \chi \to \ker \phi \to \ker \psi \to 0.
\]
Moreover, both $\chi$ and $\psi$ are graded maps if $\kk[\Sigma]$ is graded by $u$-degree and $x_i$ is given the degree of the corresponding generator. Hence it suffices to bound the degrees of the generators of $\ker \chi$ and $\ker \psi$, which is the content of the following two lemmas.
\end{proof}
\begin{lem}
The kernel of $\chi$ is generated in degrees less than $2 \sum_i \ell_i$.
\end{lem}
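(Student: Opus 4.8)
The map $\chi$ sends the polynomial ring $\kk[x_1,\ldots,x_N]$ onto the affine semigroup ring $\kk[\Sigma]$, so $\ker\chi$ is a \emph{toric ideal}: it is generated by binomials $x^u - x^v$, where $u,v \in \ZZ_{\geq 0}^N$ run over pairs with $\chi(x^u) = \chi(x^v)$, i.e. with $\sum_k u_k \gamma_k = \sum_k v_k \gamma_k$ in $\Sigma$, where $\gamma_k = (d_k, c_{k1}, \ldots, c_{kn})$ is the lattice point attached to the $k$th generator. The plan is to show that for \emph{any} such binomial relation, the common image $\sigma = \sum_k u_k\gamma_k \in \Sigma$ has $d$-degree at least $2\sum_i\ell_i$ whenever neither side is ``reducible,'' and then invoke a standard argument that reducible relations are consequences of lower-degree ones. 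More precisely, I would use the classical fact (a Markov/Gröbner-style chain argument on the fiber $\chi^{-1}(\sigma)$) that $\ker\chi$ is generated in degrees at most the maximal $d$-degree of a point $\sigma\in\Sigma$ that cannot be written as $\gamma_k + \sigma'$ with $\sigma'\in\Sigma$ nonzero for \emph{two different indices} $k$ — equivalently, a point $\sigma$ with at least two distinct ``Markov neighbors.'' So the real content is: every $\sigma\in\Sigma$ with $d(\sigma)\geq 2\sum_i\ell_i$ admits such a decomposition from both of its relation-representations, hence contributes no minimal generator.

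To get there I would use the coordinate change from the generator-bound proof: write $C = \{a_1e_1+\cdots+a_ne_n : a_i\geq 0\}$, where $e_i$ is the primitive lattice vector on the $i$th extreme ray, with $d(e_i) = \ell_i$, and let $\Lambda\subseteq\QQ^n$ be the image lattice so that $\Sigma \cong \Lambda\cap\ZZ_{\geq 0}^n$ in $a$-coordinates. The generators of $\Sigma$ are the $e_i$ (the $a$-coordinate standard basis vectors, all of $d$-degree $\ell_i < \sum_i\ell_i$) together with the $\Lambda$-points in the half-open fundamental cube $[0,1)^n$, each of $d$-degree $<\sum_i\ell_i$. Now suppose $\sigma\in\Sigma$ has $a$-coordinates $(a_1,\ldots,a_n)$ with $d(\sigma)=\sum_i a_i\ell_i \geq 2\sum_i\ell_i$. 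The key combinatorial step is: in any representation of $\sigma$ as a nonnegative integer combination of generators, I want to peel off one generator and still land in $\Sigma$, in two genuinely different ways. If some $a_i\geq 1$ then I can subtract $e_i$; since $d(\sigma)$ is large, a counting/averaging argument forces either two of the $a_i$ to be $\geq 1$ (giving two different peelings $\sigma - e_i$ and $\sigma - e_j$), or one $a_i$ to be $\geq 2$ but then in \emph{that} representation $e_i$ appears with multiplicity $\geq 2$, while a second representation (coming from the other side of the binomial, or from swapping $e_i + e_i$ against a cube-point when $2e_i$ is expressible through $\Lambda$) provides the needed distinct neighbor. The bound $2\sum_i\ell_i$ is exactly what guarantees that after removing any single generator (of $d$-degree $<\sum_i\ell_i$) what remains still has $d$-degree $\geq\sum_i\ell_i > 0$, hence is a nonzero element of $\Sigma$ — so the peeling never exits the semigroup.

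The main obstacle is making the ``two distinct Markov neighbors'' argument airtight when the fiber $\chi^{-1}(\sigma)$ is small — in particular, ruling out the degenerate case where $\sigma$ has an essentially unique representation as a sum of generators, so that no binomial of that degree is even irreducible. Here I expect to argue that if $d(\sigma)\geq 2\sum_i\ell_i$ and $\sigma$ has a unique generator-representation, then that representation must use some $e_i$ with multiplicity $\geq 2$ (because the cube-points alone have $d$-degree $<\sum_i\ell_i$ and there are at most $n$ distinct non-$e$ generators available with bounded multiplicity — a pigeonhole on total $d$-degree), and then $2e_i = e_i + e_i$ is the one slot where a second representation must exist or else $\Lambda/\ZZ^n$ would be forced to be trivial in a way that contradicts $\deg D < 1$ generically; pushing this through cleanly, rather than just heuristically, is where the care goes. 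Everything else — the binomial structure of toric kernels, the reduction from arbitrary relations to those at a ``bad'' $\sigma$, the $d$-degree bookkeeping — is standard or already set up in the generator-bound proof, so I would keep those parts terse.
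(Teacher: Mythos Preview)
Your route is far more circuitous than the paper's, and the place where you admit difficulty is a real gap, not just a matter of care.

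The paper's proof is a direct canonical-form argument. Recall the generating set of $\Sigma$: the extreme-ray vectors $e_1,\ldots,e_n$ together with the $\Lambda$-points $\lambda_1,\ldots,\lambda_m$ in the half-open cube $[0,1)^n$. For any pair $\lambda_i,\lambda_j$ the sum lies in $[0,2)^n$, so subtracting those $e_h$ whose $h$th coordinate is $\geq 1$ lands back in $[0,1)^n$, i.e.\ at some $\lambda_k$ (allowing $\lambda_0=0$). This produces a family of binomial relations $\lambda_i\lambda_j = \lambda_k\prod_{h\in S} e_h$, each of $d$-degree $d(\lambda_i)+d(\lambda_j) < 2\sum_i\ell_i$. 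They generate $\ker\chi$ because they allow any monomial in the generators to be rewritten in the canonical form $\lambda_k\prod_i e_i^{a_i}$: just repeatedly collapse any two $\lambda$'s that appear. Two monomials have the same image in $\kk[\Sigma]$ iff they have the same canonical form, so the toric ideal is generated by these low-degree rewrites. That is the whole proof.

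Your attempt to instead bound directly the degree of a \emph{minimal} binomial via a Markov-neighbor argument stumbles in the ``unique representation'' case, and the patch you sketch is wrong on the facts. You write that ``there are at most $n$ distinct non-$e$ generators available,'' but the number of cube-points $\lambda_j$ is $\ell_1\cdots\ell_n(\deg D)^{n-1}$, which can be large; a sum of several $\lambda_j$'s can easily reach $d$-degree $\geq 2\sum_i\ell_i$ without any $e_i$ appearing, so your pigeonhole does not force an $e_i$ of multiplicity $\geq 2$. The subsequent appeal to ``$\Lambda/\ZZ^n$ trivial contradicting $\deg D<1$'' has no clear meaning. The fix is not to repair this line but to abandon the indirect minimality analysis: once you see the explicit $\lambda_i\lambda_j$-relations and the canonical form they induce, the lemma is immediate.
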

\begin{proof}
Since $\chi$ takes each variable to a monomial in $\kk[\Sigma]$, its kernel is generated by equalities between monomials and the question is really one about the relations among the generators $e_j$ ($1 \leq j \leq n$) and $\lambda_j$ ($1 \leq j \leq m$) of the semigroup $\Sigma$. Letting $\lambda_0 = 0$ for convenience, we note that every $\lambda_i + \lambda_j$ ($1 \leq i,j \leq m$) has the form $\lambda_k$ ($0 \leq k \leq m$) plus a sum of some distinct $e_h$'s. These are a family of relations of degree less than $2 \sum_i \deg e_i = 2 \sum_i \ell_i$; and they generate all relations by allowing one to put any element of $\Sigma$ into the canonical form $\lambda_j + \sum_i a_i e_i$.
\end{proof}
\begin{lem}
The kernel of $\psi$ is generated in degrees less than $\ell + \sum_i \ell_i$.
\end{lem}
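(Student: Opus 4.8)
\emph{Step 1: an explicit generating set for $\ker\psi$.} The plan is to read off $\ker\psi$ from the linear relations among the $t_i$, which exist because $H^0(O(1))$ is two-dimensional. After rescaling the sections, fix $a_k,b_k\in\kk^\times$ with $t_k=a_kt_1+b_kt_2$ for each $k\ge 3$ (both nonzero since the $t_i$ vanish at distinct points). Write $y^{w}=u^{d}\prod_i y_i^{c_i}$ for the monomial of $\kk[\Sigma]$ attached to $w=(d,c_1,\dots,c_n)$, and let $\epsilon_j$ raise the $j$th coordinate by one. Call $v\in\Sigma$ \emph{$k$-admissible} ($k\ge3$) if $v-\epsilon_k\in\Sigma$, i.e.\ $c_k\ge 1-\floor{d\alpha_k}$; for such $v$ set
\[
  T_{v,k}=y^{v}-a_k\,y^{\,v-\epsilon_k+\epsilon_1}-b_k\,y^{\,v-\epsilon_k+\epsilon_2}\in\kk[\Sigma].
\]
All three monomials lie in $\Sigma$, and $T_{v,k}\in\ker\psi$ because its image is $(t_k-a_kt_1-b_kt_2)\,u^{d}t_1^{c_1}\cdots t_k^{c_k-1}\cdots t_n^{c_n}=0$. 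I claim the $T_{v,k}$ generate $\ker\psi$: any monomial violating \eqref{eq:edge} is $j$-admissible for some $j\ge3$, and rewriting it via $T_{\cdot,j}$ strictly lowers the nonnegative integer $\sum_{j\ge3}(c_j+\floor{d\alpha_j})$; hence modulo $(T_{v,k})$ every element of $\kk[\Sigma]$ reduces to a combination of the standard monomials of Proposition~\ref{prop:mon}, which form a $\kk$-basis of $S$, so $(T_{v,k})=\ker\psi$.

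\emph{Step 2: factoring the high-degree relations.} Now induct on $d(v)$, showing every $T_{v,k}$ with $d(v)\ge\ell+\sum_i\ell_i$ lies in the ideal generated by $T$'s of smaller degree. Write $v=\sum_i a_ie_i$ with $a_i\ge0$ (the $e_i$ of Section~\ref{sec:bounds}), so $d(v)=\sum_ia_i\ell_i$, and put $s_k(w)=c_k(w)+\floor{d(w)\alpha_k}$. The key point is that $\ell_i\alpha_k\in\ZZ$ for $i\ne k$, so subtracting $e_i$ ($i\ne k$) leaves $s_k$ unchanged: thus if $a_i\ge1$ for some $i\ne k$ then $v-e_i$ is $k$-admissible and $T_{v,k}=y^{e_i}T_{v-e_i,k}$, of smaller degree. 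If $a_i<1$ for all $i\ne k$ but $a_k\ge1$, a short computation shows that subtracting $e_k$ drops $s_k$ by $\floor{\ell_k\deg D}$ or by $\floor{\ell_k\deg D}+1$, while $s_k(e_k)=\floor{\ell_k\deg D}$; so if $\floor{\ell_k\deg D}\ge1$ then $T_{v,k}=y^{\,v-e_k}T_{e_k,k}$ with $d(e_k)=\ell_k<\ell+\sum_i\ell_i$, and if $\floor{\ell_k\deg D}=0$ but $s_k(v)\ge2$ then $T_{v,k}=y^{e_k}T_{v-e_k,k}$. The case $a_i<1$ for all $i$ cannot occur, since it forces $d(v)<\sum_i\ell_i$. (That each factorization is a genuine identity in $\kk[\Sigma]$ — e.g.\ that $(v-e_i)-\epsilon_k+\epsilon_1\in\Sigma$ — is a routine check, since the relevant facet value at $v$ is $s_k(v)+\{d\alpha_k\}\ge1$.)

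\emph{Step 3: the concentrated case — the expected main obstacle.} What remains is $a_i<1$ for $i\ne k$, $a_k\ge1$, $\floor{\ell_k\deg D}=0$, $s_k(v)=1$, and $\{d(v)\alpha_k\}<\{\ell_k\alpha_k\}$ (otherwise subtracting $e_k$ still works). I expect this to be the one genuinely delicate point, and the plan is to rule it out whenever $d(v)\ge\ell+\sum_i\ell_i$. Evaluating the facet functional $c_k+d\alpha_k$ at $v=\sum_i a_ie_i$ gives $a_k\ell_k\deg D=s_k(v)+\{d(v)\alpha_k\}=1+\{d(v)\alpha_k\}$. Since $\{d(v)\alpha_k\}$ and $\{\ell_k\alpha_k\}=\{\ell_k\deg D\}$ are both integer multiples of $1/q_k$ with the former strictly smaller, we get $\{d(v)\alpha_k\}\le\ell_k\deg D-1/q_k$; combined with $\deg D\ge1/\ell$ this forces $a_k\ell_k\le(1-1/q_k)/\deg D+\ell_k\le(1-1/q_k)\ell+\ell_k$, and hence $d(v)=a_k\ell_k+\sum_{i\ne k}a_i\ell_i<\ell+\sum_i\ell_i$, a contradiction. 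This closes the induction, so $\ker\psi$ is generated by the $T_{v,k}$ with $d(v)<\ell+\sum_i\ell_i$, which is the assertion of the lemma.
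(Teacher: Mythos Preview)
Your proof is correct, and Step~1 is essentially identical to the paper's opening claim: $\ker\psi$ is spanned by the trinomials $(y_k-\beta_ky_1-\gamma_ky_2)\cdot u^dy_1^{b_1}\cdots y_n^{b_n}$ coming from the linear relations among the $t_i$.

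Where you diverge is in the bookkeeping of Steps~2--3. You index each relation by its ``top'' monomial $v\in\Sigma$ and then wrestle with $k$-admissibility when peeling off $e_i$'s, which forces the case split on whether $i=k$, on $\lfloor\ell_k\deg D\rfloor$, on $s_k(v)$, and on the fractional parts. The paper instead indexes by the \emph{base} point $(d,b_1,\dots,b_n)=v-\epsilon_k$, which lives in the shifted cone $C'=\{b_i\ge -d\alpha_i,\ \sum b_i=-1\}$. Over $\RR$ this cone is simply $C$ translated by the single vector $\epsilon=(1/\deg D,\,-\alpha_1/\deg D,\dots,-\alpha_n/\deg D)$, so every lattice point of $C'$ is $\epsilon+\sum_i a_i'e_i$ with $a_i'\ge 0$; peeling off any $e_i$ with $a_i'\ge 1$ now automatically lands back in $C'$, with no admissibility check and no distinguished index $k$. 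The degree bound then drops out in one line: $d<d(\epsilon)+\sum_i\ell_i=1/\deg D+\sum_i\ell_i\le \ell+\sum_i\ell_i$. In effect your Step~3 computation, culminating in $a_k\ell_k\le(1-1/q_k)/\deg D+\ell_k$, is a by-hand recovery of the contribution of $d(\epsilon)=1/\deg D$ that the paper's translation makes transparent. Both arguments work; the paper's buys a uniform one-case proof, while yours has the minor virtue of never leaving the semigroup $\Sigma$.
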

\begin{proof}
We first claim that $\ker \psi$ is spanned as a $\kk$-vector space by elements of the form
\begin{equation}
  (y_j - \beta_j y_1 - \gamma_j y_2) \cdot u^d y_1^{b_1} \cdots y_n^{b_n}
  \label{eq:rel}
\end{equation}
for $3 \leq j \leq n$, $b_i \geq -\alpha_i d$, and $\sum_i b_i = -1$, where $\beta_j$ and $\gamma_j$ are the coordinates of $t_j$ with respect to the basis $(t_1, t_2)$ of the two-dimensional space $H^0(O(1))$. If we have a monomial $f = u^d t_1^{c_1} \cdots t_n^{c_n} \in S$ where any of the $c_j$'s ($3 \leq j \leq n$) strictly exceeds $-\floor{d\alpha_j}$, then we can put $b_j = c_j-1$ and all other $b_i = c_i$ to write $f$ as a linear combination of monomials with strictly smaller values of $\sum_{i \geq 3} c_j$. Iterating this process, we arrive at a linear combination of the $r+1$ monomials with all exponents minimal except those of $t_1$ and $t_2$, which form a basis for $S_d$ by Proposition \ref{prop:mon}.

(We note that $\psi$ is an isomorphism for $n = 2$, one of the circumstances making this case exceptionally tractable.)

It remains to bound the generation degree of the $S$-module with generators \eqref{eq:rel}. They are parametrized by integer points $(d,b_1,\ldots,b_n)$ in the cone $C'$ defined by the inequalities
\[
  b_i \geq -\alpha_i d \quad \text{with} \quad \sum_i b_i = -1.
\]
Over $\RR$, the cone $C'$ is simply a translation of $C$ by the unique vector
\[
  \epsilon = \left(\frac{1}{\deg D}, \frac{-\alpha_1}{\deg D}, \ldots, \frac{-\alpha_n}{\deg D}\right)
\]
where equality holds in each of the defining inequalities of $C'$; thus all its lattice points have the form $\epsilon + \sum_{i} a_i e_i$ where $a_i \geq 0$ and $\{e_1,\ldots,e_n\}$ is the aforementioned $\RR_{\geq0}$-basis of $C$. If any $a_i$ is $1$ or greater, the corresponding relation is redundant, as it can be generated from a simpler one by multiplication by $\psi(e_i) \in S$. So any minimal relation has degree less than
\[
  \deg (\epsilon + e_1 + \cdots + e_n) = \frac{1}{\deg D} + \sum_i \ell_i \leq \ell + \sum_i \ell_i
\]
as desired.
\end{proof}

\section{Stability of generation}
\label{sec:stabgen}
An issue that obviously does not come up when $n \leq 3$ is how the ring $S$ depends on the points $P_i$ if these are allowed to move while their multiplicities $\alpha_i$ remain fixed. In general, the detection of a linear relation among three or more monomials $u^d t_1^{c_1} \cdots t_n^{c_n}$ can be a thorny task.
\begin{ex}\label{ex:char}
  Let $D = 2P_1 + 0P_2 + 0P_3$ (the zeros are to artificially inflate the supply of monomials; small rational numbers could be used instead). Then $S_1 = f_0H^0(O(2))$ where $f_0 = ut_1^{-2}$ is fixed. Suppose we try the basis
\[
  t_1^2, \quad t_2^2, \quad t_3^2
\]
for $O(2)$. Normalizing so that $P_1 = \infty$, $P_2 = 0$, and $P_3 = 1$, we recognize these (up to a factor of $t_1^2$) as the functions $1$, $t^2$, and $(t-1)^2$, which are linearly independent---if and only if the ground field $\kk$ does not have characteristic two!
\end{ex}
\begin{ex}\label{ex:harm}
  If $D = 2P_1 + 0P_2 + 0P_3 + 0P_4$ has one more ghost point, we still have $S_1 = f_0H^0(O(2))$ and can try
\[
  t_1^2, \quad t_2^2, \quad t_3t_4
\]
as a basis for $O(2)$. Letting $P_1 = \infty$, $P_2 = 0$, $P_3 = 1$, and $P_4 = \lambda$, we see that a linear dependency among the corresponding functions $1$, $t^2$, $(t-1)(t-\lambda)$ exists if and only if $\lambda = -1$, that is, if the points $P_1$ and $P_2$ harmonically separate $P_3$ and $P_4$.
\end{ex}

In these examples, the issue can be eliminated by choosing the basis $\{t_1^2, t_1t_2, t_2^2\}$. However, the following more complicated example shows that the number and degrees of the generators (which are invariants of the ring $S$) can also depend on the points $P_i$.
\begin{ex}\label{ex:chords}
Let $D = -\frac{1}{2}(P_1 + P_2) + \frac{1}{3}(P_3 + P_4) + \frac{1}{5}(P_5 + P_6)$. As in Example \ref{ex:235}, we find three minimal generators $x \in S_{6}$, $y \in S_{10}$, $z \in S_{15}$ that generate all elements of degree $29$ or less. However, in degree $30$, the elements $f_1 = x^5$, $f_2 = y^3$, and $f_3 = z^2$ now live in a vector space of dimension $\deg(30D) + 1 = 3$. The line bundle $O(30D)$ can be identified with $O(2)$, which is the pullback of $O_{\PP^2}(1)$ under the Veronese map $v : \PP^1 \to \PP^2$; the $f_i$ are sections which lift uniquely to $O_{\PP^2}(1)$. On counting up the poles and zeros, it becomes apparent that the two zeros of $f_i$ are at $P_{2i-1}$ and $P_{2i}$. Hence the image $\tilde{f}_i \in H^0(O_{\PP^2}(1))$ is a linear form vanishing on the line $\overline{v(P_{2i-1})v(P_{2i})}$. We see that $S$ has a relation (and hence a generator) in degree $30$ if and only if these three linear forms are linearly dependent, which in turn occurs exactly when the three chords $v(P_{2i-1})v(P_{2i})$ are concurrent.

With a bit of computation, we can calculate the full minimal presentation of the canonical ring in each of these two cases; the results are summarized here:
\begin{itemize}
  \item Generic case: three generators in degrees $2$, $3$, $5$; one relation in degree $60$.
  \item Special case: four generators in degrees $2$, $3$, $5$, $30$; two relations in degrees $30$ and $60$.
\end{itemize}
Thus the number of generators and relations, as well as their degrees, is affected by the point locations.
\end{ex}
We show that, at least regarding the degrees of the generators, the six points used in this example are minimal.
\begin{defn}
A number (or $S$-element, etc{.}) depending on a $\QQ$-divisor $D = \sum_{i} \alpha_i P_i$ is \emph{stable} if its value (or form) is independent of the points $P_i$ (as long as no two coincide) and the characteristic of the ground field $\kk$.
\end{defn} 
\begin{thm}\label{thm:stablegen}
If $\alpha_1,\ldots,\alpha_n$ are rational numbers and $n \leq 5$, then the canonical ring of the divisor $D = \sum_{\alpha_i P_i}$ has a stable generating set of monomials $u^d t_1^{c_1}\cdots t_n^{c_n}$.
\end{thm}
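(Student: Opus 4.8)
The assertion should be read as producing a stable \emph{minimal} generating set: the Hilbert basis of the semigroup $\Sigma$ of Proposition~\ref{prop:mon} always furnishes \emph{some} stable generating set of monomials, so the real content is that for $n\le 5$ the minimal generating set---its cardinality, its degrees, and even a choice of exponent vectors---can be taken independent of the points $P_i$ and of $\operatorname{char}\kk$, something that fails at $n=6$ by Example~\ref{ex:chords}.

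The plan begins with a reduction. A monomial $m$ of degree $d$ can serve as a minimal generator exactly when it has nonzero image in $S_d/(S_+^2)_d$, where $(S_+^2)_d=\sum_{a+b=d,\,a,b\ge 1}S_aS_b$ is the span of the \emph{decomposable} degree-$d$ monomials; such an $m$ is then forced to be indecomposable in $\Sigma$, and I write $B_d$ for the (configuration-independent) set of indecomposable monomials of degree $d$. Since $(S_+^2)_d$ is intrinsic to $S$ and determined by $S_{<d}$, it suffices to exhibit, degree by degree, a subset $G_d\subseteq B_d$ whose exponent vectors are independent of the configuration and the characteristic and which is a basis of $S_d/(S_+^2)_d$; in particular one must prove $g_d:=\dim S_d/(S_+^2)_d$ is stable. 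By the high-degree stability statement (Theorem~\ref{thm:xgen}(a)) this is automatic once $d\ge(2n-2)/\deg D$, so I may restrict to the finitely many degrees $d<(2n-2)/\deg D$; for these, $r:=\deg\floor{dD}\le d\deg D<2n-2$, hence $r\le 2n-3$ and $\dim S_d=r+1\le 2n-2\le 8$.

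The next step is to pass to linear forms on $\PP^1$. Fixing $O(\floor{dD})\cong O(r)$, every degree-$d$ monomial becomes a product of $r$ of the linear forms $t_i$ (with multiplicity); the decomposable ones span $(S_+^2)_d$ and the elements of $B_d$ are the indecomposable such products. Because the decomposables form a \emph{fixed} finite family of vectors varying algebraically with $(P_i)$, the rank of this family is generically maximal and drops only on a proper closed locus and on special characteristics; hence $g_d$ is generically minimal, and a failure of stability is exactly a jump $g_d>g_d^{\mathrm{gen}}$. I will rule this out for $n\le 5$ by producing, for each $d$ and each configuration, a \emph{fixed} independent subset of the decomposables of the generic size together with a fixed subset of $B_d$ completing it to a basis. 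The elementary input is that for two independent linear forms $a,b$ the monomials $a^ib^{r-i}$ form a basis of $H^0(O(r))$ over any field, and more generally that products of powers of a small fixed set of the $t_i$ remain in general position; it is the products involving many distinct $t_i$---the $t_1t_2\cdots$-type monomials behind Examples~\ref{ex:char}--\ref{ex:chords}---that are unstable.

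The remaining work is a case analysis, and this is where the hypothesis $n\le 5$, and the main difficulty, enter. Since $n\le 3$ presents no configuration issue, one treats $n=4$ and $n=5$. The dangerous degrees are finite (Theorem~\ref{thm:bounds}), and for each the slice of $C$ at height $d$ has one of finitely many combinatorial types, determined by the fractional parts of the $d\alpha_i$; for each type one identifies $B_d$ and the decomposables and verifies that the required stable choices exist. With at most five supported points one cannot reproduce the obstruction of Example~\ref{ex:chords}---three indecomposable generators whose powers are products of three \emph{disjoint} pairs of linear forms, i.e.\ three concurrent chords of a conic---which needs six supported points; I expect the bulk of the proof, and its only genuine difficulty, to be checking, type by type for $n=5$, that no other degeneracy occurs: that the decomposable monomials span the same subspace of $H^0(O(r))$ for every configuration and every characteristic (a statement about independence of products of linear forms on the rational normal curve of degree $r\le 7$), and that this is uniform in $\operatorname{char}\kk$---the characteristic can only create extra dependences, so one must check the char-$2$ and harmonic degeneracies of Examples~\ref{ex:char}--\ref{ex:harm} never involve the chosen monomials of $G_d$. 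This reduces once more to those monomials involving only boundedly many distinct points, so the argument closes, but the bookkeeping is the substance.
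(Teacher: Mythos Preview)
Your reduction to showing that each quotient $S_d/(S_+^2)_d$ admits a stable monomial basis is exactly right, and matches the paper's first step. But from there the two arguments diverge, and the key idea is missing from your proposal.

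The insight you do not use is that on $\PP^1$ the multiplication map $H^0(O(a))\otimes H^0(O(b))\to H^0(O(a+b))$ is surjective, so each nonzero summand $S_cS_{d-c}$ is not merely a span of some decomposable monomials but the \emph{full} space $u^d H^0(\floor{cD}+\floor{(d-c)D})$. Since $\floor{c\alpha_i}+\floor{(d-c)\alpha_i}\in\{\floor{d\alpha_i}-1,\floor{d\alpha_i}\}$, this equals $u^d H^0(\floor{dD}-\sum_{i\in A_c}P_i)=:V_{A_c}$ for some subset $A_c\subseteq\{1,\dots,n\}$. Thus $(S_+^2)_d=\sum_{A\in\FF}V_A$ for a family $\FF$ of subsets, and the whole problem becomes: for $n\le 5$, does $H^0(O(r))/\sum_{A\in\FF}V_A$ always have a stable monomial basis? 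The paper isolates this as Lemma~\ref{lem:cores} and proves it by a short combinatorial argument: after minimizing $\FF$, any two distinct $A,B\in\FF$ satisfy $|A\cup B|\ge r+2$ (else the exact sequence $0\to V_{A\cup B}\to V_A\oplus V_B\to V_{A\cap B}$ is right-exact and the two terms merge), and an inclusion--exclusion count then forces $|\FF|\le 2$ when $n\le 5$. The three cases $|\FF|=0,1,2$ are handled explicitly in a few lines. This argument is uniform in $d$: no reduction to finitely many degrees is needed, and the forward reference to Theorem~\ref{thm:xgen}(a) (whose proof in the paper actually recycles this same lemma) is unnecessary.

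By contrast, your plan treats $(S_+^2)_d$ as a span of individual product monomials and proposes to verify, combinatorial type by combinatorial type for all small $d$, that a fixed subfamily stays independent across all configurations and characteristics. You acknowledge that this bookkeeping is ``the substance'' and do not carry it out. Without the $V_A$ structure the number of types is large and the independence checks are genuine linear-algebra statements about products of linear forms (of the flavor of Examples~\ref{ex:char}--\ref{ex:harm}), not obviously reducible to ``monomials involving only boundedly many distinct points''. So as written this is a plausible outline rather than a proof, and the step that would make it tractable---recognizing that the pregenerated subspace is a sum of coordinate-type subspaces $V_A$ and bounding $|\FF|$---is precisely what the paper supplies.
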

\begin{proof}
Fix $d \geq 1$. A minimal set of generators in degree $d$ is a basis for the quotient of $S_d$ by the space of pregenerated functions
\[
  \sum_{c=1}^{\floor{d/2}} S_{c} S_{d-c}.
\]
If $S_{c}$ or $S_{d-c}$ is zero (and this can be checked stably) then so is their product; otherwise
\begin{align*}
  S_{c} S_{d-c} &= u^d H^0(\floor{cD}) H^0(\floor{(d-c)D}) \\
  &=  u^d H^0(\floor{cD}+ \floor{(d-c)D}) \\
  &= u^d H^0\Big(\sum_{i=1}^n (\floor{c\alpha_i} + \floor{(d-c)\alpha_i})P_i\Big).
\end{align*}
(Here we are using the easy fact that, on $\PP^1$, the natural map $H^0(O(a)) \otimes H^0(O(b)) \to H^0(O(a+b))$ is surjective if $a,b \geq 0$.)
Note that
\[
  d\alpha_i - 2 < \floor{c\alpha_i} + \floor{(d-c)\alpha_i} \leq d\alpha_i
\]
or, since the middle expression is an integer,
\[
  \floor{d\alpha_i} - 1 \leq \floor{c\alpha_i} + \floor{(d-c)\alpha_i} \leq \floor{d\alpha_i}.
\]
Hence $\floor{cD}+ \floor{(d-c)D}) = \floor{dD} - \sum_{i\in A_c} P_i$ for some subset $A_c$ of $\{1,\ldots,n\}$. Define
\[
  V_A = u^d H^0(\floor{dD} - \sum_{i\in A} P_i) \subseteq S_d.
\]
We now seek a basis for a space of the form
\[
  S_d / \sum_{S \in \FF} V_S
\]
for some family $\FF \subseteq 2^{\{1,\ldots,n\}}$. Since the line bundle $L = O(\floor{dD})$ is isomorphic to $O(r)$ ($r = \deg \floor{dD}$) and the monomials we desire in our basis are exactly (up to scaling) the sections of $L$ whose zeros are at the points $P_i$, we have reduced the theorem to the following lemma.
\end{proof}

\begin{lem}\label{lem:cores}
Let $n$ $(2 \leq n \leq 5)$ and $r$ be integers, and let $P_1,\ldots,P_n$ be distinct points of $\PP^1$. Denote by $V_i$ the subspace of sections in $V = H^0(O(r))$ that vanish at $P_i$; let
\[
  V_A = \bigcap_{i \in A} V_i = H^0\Big(O(r)\Big(-\sum_{i\in A} P_i\Big)\Big)
\]
for $A \subseteq \{1,\ldots,n\}$. Then for any family $\FF \subseteq 2^{\{1,\ldots,n\}}$, the quotient of $V$ by the subspace
\[
  V_\FF = \bigcup_{A \in \FF} V_A
\]
has a stable basis of monomials $t_1^{c_1}\cdots t_n^{c_n}$.
\end{lem}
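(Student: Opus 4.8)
The plan is to produce, for each family $\FF$, an explicit list of degree‑$r$ monomials $t_1^{c_1}\cdots t_n^{c_n}$ and to show it maps to a basis of the quotient $V/V_\FF$ (where, as in the ambient proof, I read $V_\FF=\sum_{A\in\FF}V_A$) by arguments visibly insensitive to the positions of the $P_i$ and to the characteristic. First I would normalize. If $r<0$ then $V=0$. Since $V_B\subseteq V_A$ whenever $A\subseteq B$, I may replace $\FF$ by its set of inclusion‑minimal members without changing $V_\FF$, so $\FF$ may be taken to be an antichain, and I may assume $\emptyset\notin\FF$ (else the quotient is zero). A singleton $\{i\}\in\FF$ is then handled on the spot: $V_{\{i\}}$ is a hyperplane, so $V/V_\FF$ is a quotient of the one‑dimensional $V/V_{\{i\}}$, and whether it is zero or is spanned by the class of $t_j^r$ (any $j\neq i$) is decided combinatorially. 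So I may assume every member of $\FF$ has size at least $2$.

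The engine is a dual description through the rational normal curve. In $V^*$ the annihilator $V_A^\perp$ is the span of the evaluation functionals $e_j$ at the points $P_j$, $j\in A$ (after some trivialization of the fibers of $O(r)$; the span does not depend on it), so $V_\FF^\perp=\bigcap_{A\in\FF}\langle e_j:j\in A\rangle$. The classical input is that $e_1,\dots,e_n$ are the coordinate vectors of $n$ distinct points on the degree‑$r$ rational normal curve in $\PP(V^*)=\PP^r$, hence lie in linear general position: any $r+1$ of them are linearly independent, over every field and for every configuration (a generalized Vandermonde determinant). Consequently, whenever $\bigl|\bigcup_{A\in\FF}A\bigr|\le r+1$ --- in particular whenever $r\ge n-1$, hence always once $r\ge 4$ --- the relevant $e_j$ behave exactly like coordinate vectors, so $V_\FF^\perp=\langle e_j:j\in\bigcap_{A\in\FF}A\rangle$, equivalently $V_\FF=V_B$ with $B=\bigcap_{A\in\FF}A$. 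A quotient of $V=H^0(O(r))$ by such a ``coordinate'' subspace $V_B$ has an obvious stable monomial basis: for each $j\in B$ take the monomial $\bigl(\prod_{k\in B,\,k\neq j}t_k\bigr)\,t_\ell^{\,r-|B|+1}$ for a fixed $\ell\neq j$, whose values at the points $(P_k)_{k\in B}$ form a diagonal matrix; and for $\FF=\emptyset$ one takes $\{t_1^{c}t_2^{r-c}:0\le c\le r\}$.

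There remain the small cases $r\in\{0,1,2,3\}$ --- precisely the range in which $\bigl|\bigcup_{A\in\FF}A\bigr|$ can exceed $r+1$ while $n\le 5$, so that the $e_j$ carry one or two linear relations and the combinatorial picture above need not apply. Here I would argue by a finite case analysis over the antichains $\FF$ of $\ge 2$‑element subsets of $\{1,\dots,5\}$. The relevant geometry is that of a conic ($r=2$) and the twisted cubic ($r=3$): a two‑element $A$ contributes the line spanned by the chord $\overline{P_iP_j}$, a three‑element $A$ a secant or osculating plane, and so on, and an instability in $V/V_\FF$ would force several of these flats into special position. The point --- and the reason $n\le 5$ is exactly the right bound --- is that the only genuinely non‑combinatorial coincidence available to chords and secants of these curves is three pairwise‑disjoint chords being concurrent, which is the phenomenon of Example~\ref{ex:chords} and needs six distinct points; with only five points every incidence is forced (for instance, three distinct chords of the conic are concurrent iff the three pairs form a star), so $\dim V/V_\FF$ is stable, and in each residual case one writes down a concrete monomial basis.

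The step I expect to be the real obstacle is this last one: not the (tedious but mechanical) enumeration of cases, but checking that the monomials chosen in each case are linearly independent in the quotient over \emph{every} field. This is delicate because naive choices degenerate identically --- for instance $\{t_1^3,t_2^3,t_3^3,t_1t_2t_3\}$ is never a basis of $H^0(O(3))$, in any characteristic, by the identity $\ell_1^3+\ell_2^3+\ell_3^3=3\ell_1\ell_2\ell_3$ for linear forms with $\ell_1+\ell_2+\ell_3=0$ (and in characteristic $3$ already $t_1^3,t_2^3,t_3^3$ are dependent, since cubing is additive). So the bases must be chosen so that the needed independence reduces to statements like ``a product $t_pt_q$ of two of our linear forms lies in $\langle t_kt_l,\,t_mt_o\rangle$ only if $\{p,q\}$ is $\{k,l\}$ or $\{m,o\}$'', which one verifies by evaluating at a well‑chosen point and which therefore hold over any field; equivalently, each independence claim must come down to the non‑vanishing of a determinant that is a monomial in the coordinate differences of the $P_i$, rather than merely nonzero at a generic point. (A workable choice for $n=3$, $r=3$, for example, is the ``cyclic'' basis $\{t_1t_2t_3,\,t_1^2t_2,\,t_2^2t_3,\,t_3^2t_1\}$.)
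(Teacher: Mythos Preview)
Your dual picture via the rational normal curve is sound, and the reduction to $V_\FF=V_B$ whenever $\bigl|\bigcup_{A\in\FF}A\bigr|\le r+1$ is correct. But the plan then bifurcates on $r$, and the branch $r\le 3$ is left as a finite enumeration over antichains of $2^{\{1,\dots,5\}}$, with the characteristic--independent choice of monomial bases only sketched. You are candid that this is the ``real obstacle,'' and indeed it is: you have not actually carried it out, and the examples you give (the identity $\ell_1^3+\ell_2^3+\ell_3^3=3\ell_1\ell_2\ell_3$, etc.) show exactly how many special verifications would be needed to certify each chosen basis over every field.

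The paper sidesteps all of this with a sharper reduction that works uniformly in $r$. Rather than only passing to an antichain, it repeatedly consolidates \emph{pairs}: the exact sequence
\[
0\to V_{A\cup B}\to V_A\oplus V_B\to V_{A\cap B}
\]
shows $V_A+V_B=V_{A\cap B}$ whenever $|A\cup B|\le r+1$, so after reduction any two surviving members satisfy $|A\cup B|\ge r+2$, while each $|A|\le r$ (else $V_A=0$). If three members $A,B,C$ remained, inclusion--exclusion would give
\[
n\ \ge\ |A\cup B\cup C|\ =\ \sum_{\text{pairs}}|A\cup B|-\sum|A|+|A\cap B\cap C|\ \ge\ 3(r+2)-3r\ =\ 6,
\]
contradicting $n\le 5$. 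Thus $|\FF|\le 2$ for \emph{every} $r$, and in the $|\FF|=2$ case a second count forces one of the two sets to have size exactly $r$. The three residual cases $|\FF|=0,1,2$ are then handled by a single device --- choosing monomials with prescribed order of vanishing at one point $P_1$ --- so that every linear--independence claim reduces to distinctness of pole orders, which is automatically field-- and position--independent.

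Your global consolidation (when the whole union is small) is the iterated form of the paper's pairwise step; what you are missing is that the \emph{pairwise} inequality $|A\cup B|\ge r+2$, fed into inclusion--exclusion, already bounds $|\FF|$ by $2$. That single observation eliminates the entire $r\le 3$ case analysis and with it the delicate monomial--independence checks you anticipate.
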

\begin{proof}
Assume not; and assume that $|\FF|$ is as small as possible. In particular, each $V_A$ is nonempty, so
\[
  |A| \leq r
\]
for each $A \in \FF$. On the other hand, the sets $A$ cannot be so small that $V_A + V_B$ could be consolidated into $V_{A\cap B}$. This happens if the rightmost map in the exact sequence
\begin{equation}
  0 \to V_{A \cup B} \xrightarrow{\text{diag}} V_{A} \oplus V_{B} \xrightarrow{-} V_{A \cap B} \label{eq:2sets}
\end{equation}
is surjective, which always happens if $|A \cup B| \leq r+1$ (for then each $V_C$ in the diagram has dimension $r - |C| + 1$ and the alternating sum of the dimensions vanishes). Thus
\begin{equation}\label{eq:union}
  |A \cup B| \geq r+2
\end{equation}
for all distinct $A, B \in \FF$. Now suppose $\FF$ contains three elements $A,B,C$. We have
\begin{align*}
  n &\geq |A \cup B \cup C| \\
  &= |A \cup B| + |A \cup C| + |B \cup C| - |A| - |B| - |C| + |A \cap B \cap C| \\
  &\geq 3(r+2) - 3r + 0 \\
  &= 6,
\end{align*}
a contradiction.

So $\FF$ has at most $2$ elements and we can finish the proof in each case.
\begin{itemize}
  \item If $\FF = \emptyset$, then $V/V_{\FF} = V$ has a basis $t_2^r, t_2^{r-1}t_1,\ldots, t_1^r$.
  \item If $\FF = \{A\}$, we may assume $1 \in A$. $V_{\FF}$ has dimension $r - |A| + 1$ and contains sections with zeros of order $1,\ldots,r - |A| + 1$ at $P_1$. Thus any set of monomials of degree $r$ containing $t_1$ to the exponents $0, r - |A| + 2, r - |A| + 3, \ldots, r$ is a basis for $V/V_{\FF}$.
  \item If $\FF = \{A,B\}$, note that either $A$ or $B$ has size exactly $r$ since otherwise
  \begin{align*}
    2(r-1) \geq |A| + |B| &\geq |A \cup B| \geq r+2 \\
    r &\geq 4 \\
    n &\geq |A \cup B| \geq r+2 \geq 6.
  \end{align*}
  Without loss of generality $|B| = r$. Also $A$ cannot be contained in $B$ so we may assume $1 \in A \setminus B$. Now $V_B$ is one-dimensional and its one basis element does not vanish at $P_1$, while $V_A$ has a basis of $r - |A| + 1$ elements vanishing to orders $1,\ldots,r - |A| + 1$ at $P_1$. Hence a system of monomials vanishing to orders $r - |A| + 2, r - |A| + 3, \ldots, r$ furnishes the desired basis. \qedhere
\end{itemize}
\end{proof}
Although we have stated the preceding theorem for the case $n \leq 5$, our methods are more widely applicable.
\begin{thm}\label{thm:xgen}
\begin{enumerate}[$(a)$]
  \item[]
  \item The generators of $S_D$ in degrees greater than or equal to $(2n-2)/\deg D$ can be given by stable monomials.
  \item Fix the ground field $\kk$. For each $n \geq 1$, there is a proper closed subvariety $X_n \subset (\PP^1)^n$ with the following property: if $\alpha_1,\ldots,\alpha_n$ are fixed rational numbers (or even real numbers), then the number of generators of $S_D$ in each degree does not depend on the points $P_i$ as long as $(P_1,\ldots,P_n) \notin X_n$.
\end{enumerate}
\end{thm}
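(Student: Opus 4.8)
For part (a), the plan is to revisit the proof of Theorem \ref{thm:stablegen} and isolate exactly where the bound $n \leq 5$ was used. The reduction there expresses a minimal generating set in degree $d$ as a basis for $S_d / \sum_{A \in \FF} V_A$, where each $V_A = u^d H^0(\floor{dD} - \sum_{i \in A} P_i)$ and the relevant sets $A$ all satisfy $|A| \leq r := \deg \floor{dD}$ (since $V_A$ must be nonempty). The combinatorial heart of Lemma \ref{lem:cores} shows that any minimal such family $\FF$ satisfies $|A \cup B| \geq r+2$ for distinct members; the contradiction with three sets came from $n \geq |A \cup B \cup C| \geq 3(r+2) - 3r = 6$. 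The point is that this argument only needs $n \leq 5$ when $\FF$ has three or more elements — and that situation only arises when $r$ is small relative to $n$. When $d \geq (2n-2)/\deg D$, we have $r = \deg\floor{dD} > d\deg D - n \geq 2n-2-n = n-2$, so $r \geq n-1$; I would then check that with $r \geq n - 1$ the inequalities force $|\FF| \leq 2$ regardless of $n$ (if $|\FF| \geq 3$ then $n \geq 3(r+2)-3r = 6 > n$ fails only when $n \geq 6$, but then $r \geq n-1 \geq 5$ forces each $|A| \leq r$ while $|A \cup B| \geq r+2$, and one pushes the three-set count to a contradiction using $r \geq n-1$). Once $|\FF| \leq 2$, the explicit monomial bases constructed in the three bullet points of Lemma \ref{lem:cores} — which never used $n \leq 5$, only $|\FF| \leq 2$ and $|B| = r$ in the two-set case (this last step does use $r \geq 4$, i.e.\ $n \geq 6$, precisely the regime we are in) — give the stable generators. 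The mild wrinkle is the arithmetic bookkeeping to confirm the threshold $(2n-2)/\deg D$ is exactly what makes $r$ large enough; I expect no conceptual difficulty there.

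For part (b), the strategy is a general-position / semicontinuity argument. Fix $\kk$ and $n$. For a given degree $d$, the dimension of the space of pregenerated functions $\sum_{c} S_c S_{d-c} = \sum_{A \in \FF_d} V_A$ inside $S_d$ is, after the identification $S_d = u^d H^0(O(r))$, the dimension of a sum of subspaces $V_A = H^0(O(r)(-\sum_{i\in A}P_i))$ each cut out by linear conditions (vanishing at the $P_i$) depending algebraically on the tuple $(P_1,\ldots,P_n)$. The dimension of such a sum is the rank of an explicit matrix whose entries are polynomial in the coordinates of the $P_i$; hence it is \emph{lower} semicontinuous, and the number of generators in degree $d$ — namely $\dim S_d$ minus this rank — is \emph{upper} semicontinuous, attaining its generic (minimal) value off a proper closed subvariety $X_{n,d} \subset (\PP^1)^n$. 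The issue is that $d$ ranges over infinitely many values, so one cannot naively take $X_n = \bigcup_d X_{n,d}$.

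The resolution is to invoke part (a) together with the generator-degree bound of Theorem \ref{thm:bounds}: once $d \geq (2n-2)/\deg D$, part (a) gives stable generators, so the generator count in those degrees is already independent of the $P_i$ with no bad locus at all. And $(2n-2)/\deg D$, while not bounded purely in terms of $n$, is for each fixed divisor a finite threshold — but to get a locus depending only on $n$ I would instead argue as follows: the generators all live in degrees $d < \sum_i \ell_i$ by Theorem \ref{thm:bounds}, and in the range $(2n-2)/\deg D \le d < \sum_i \ell_i$ they are stable by (a), while for the finitely many $d$ with $d < (2n-2)/\deg D$ each contributes a proper closed $X_{n,d}$; taking the (finite) union handles one divisor. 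To make $X_n$ depend only on $n$ and not on the chosen $\alpha_i$, note that the conditions defining $X_{n,d}$ — linear dependence among the monomial sections $t_1^{c_1}\cdots t_n^{c_n}$ of a fixed $O(r)$ with prescribed vanishing — are, as $d$ and the $\alpha_i$ vary, drawn from the countable family of determinantal conditions ``some set of monomial sections of some $O(r)$ with some prescribed zero-divisor supported on $\{P_1,\dots,P_n\}$ is dependent''; each such condition is either identically true (a ghost-point collision we have excluded) or cuts out a proper subvariety, and I would take $X_n$ to be the union over the finitely many that can be realized with the multiplicities bounded appropriately, or — cleanest — simply define $X_n$ to be the locus where some $n$-subset-indexed family of the monomial sections of $O(r)$ for $r \le$ the part-(a) threshold fails to have the generic rank, a finite union of proper closed sets. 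The main obstacle is exactly this last point: arranging that a \emph{single} proper closed $X_n$, depending on $n$ alone, simultaneously governs all divisors $D$ with that number of points; I expect it to come down to observing that the relevant rank drops are governed by finitely many ``incidence'' polynomials (vanishing of minors of Vandermonde-type matrices built from the $P_i$) whose vanishing loci, being proper and closed, have a union that is still proper and closed, and that part (a) confines the whole question to bounded degree once $\deg D$ is bounded below — with a short separate argument (or an appeal to the $\deg D \to 0$ degeneration being itself a closed condition) to absorb divisors of very small degree.
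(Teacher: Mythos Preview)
Your approach to part~(a) is the same as the paper's, but you make it harder than necessary. Once $r \geq n-1$, the two-set merge condition \eqref{eq:union} already yields a contradiction: $|A \cup B| \leq n \leq r+1 < r+2$. Hence any minimal family $\FF$ has $|\FF| \leq 1$, and you land directly in the first two bullet cases of Lemma~\ref{lem:cores}, which indeed never used $n \leq 5$. Your detour through $|\FF| \leq 2$ and the subsequent attempt to recover $|B| = r$ in the two-set case is both unnecessary and shaky (the derivation of $|B|=r$ in Lemma~\ref{lem:cores} used $n \leq 5$ to turn $n \geq r+2 \geq 6$ into a contradiction; with $r \geq n-1$ that step is replaced by the simpler two-set contradiction above, not by the reasoning you sketch). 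The arithmetic for the threshold $(2n-2)/\deg D$ is exactly as you wrote.

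For part~(b) there is a genuine gap. You correctly set up the semicontinuity framework and see that each degree contributes a proper closed bad locus, but you never isolate the reason these loci form a \emph{finite} family depending only on $n$. Your attempts via Theorem~\ref{thm:bounds} and bounding $\deg D$ from below do not achieve this, as you yourself note (``the main obstacle is exactly this last point''). The missing observation is that the entire question of whether a stable monomial basis exists for $S_d/\sum_c S_c S_{d-c}$ depends on the divisor $D$ and the degree $d$ only through the pair $(r,\FF)$, where $r = \deg\floor{dD}$ and $\FF \subseteq 2^{\{1,\ldots,n\}}$. Part~(a) shows that $r \geq n-1$ is always safe; for $r \in \{0,1,\ldots,n-2\}$ there are only finitely many pairs $(r,\FF)$, and for each one a generically chosen monomial basis of $V/V_\FF$ fails only on a proper closed subvariety $X_{r,\FF}$. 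Then $X_n = \bigcup_{r,\FF} X_{r,\FF}$ is a finite union and depends only on $n$. Your phrase ``$O(r)$ for $r \leq$ the part-(a) threshold'' is almost this, but you conflate the threshold on $d$ (which depends on $\deg D$) with the threshold on $r$ (which is simply $n-2$); it is the latter that makes the argument go through uniformly in the $\alpha_i$.
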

\begin{proof}
The key to both parts is that if $r \geq n-1$, the proof of Lemma \ref{lem:cores} still goes through, as follows. First replace $\FF$ by the minimal family yielding the same space $V_\FF$. Then $|\FF| \leq 1$ since otherwise we have \eqref{eq:union} which is a contradiction (since $|A \cup B| \leq n \leq r+1$). Then the proof ends up in either the $|\FF| = 0$ or the $|\FF| = 1$ case, neither of which uses the hypothesis that $n \leq 5$.
\begin{proof}[Proof of $(a)$] If $d \geq (2n-2)/\deg D$, then
\[
  r = \deg \floor{d D} > d \deg D - n \geq (2n - 2) - n = n - 2,
\]
so $r \geq n-1$.
\noqed
\end{proof}
\begin{proof}[Proof of $(b)$] Here is a crude argument. Whether a set of monomials is a minimal generating set in degree $d$ is tantamount to whether a collection $m_1,\ldots,m_r$ of degree-$r$ monomials in $t_1,\ldots,t_r$ is a basis for $V/V_\FF$ considered as a quotient of $O(r)$. As we have just seen, such a basis can be chosen stably if $r \geq n-1$. For each $r = 0,1,\ldots,n-2$ and each $\FF \subseteq 2^{\{1,\ldots,n\}}$, we can choose a generic basis for $V/V_\FF$ simply by removing elements from the basis $t_1^r, t_1^{r-1}t_2,\ldots, t_2^r$ of $V$. Let $X_{r,\FF}$ be the variety of point locations where the chosen set ceases to be a basis. Then, away from the finite union $X = \bigcup_{r,\FF} X_{r,\FF}$, the ring $S_D$ has a stable monomial basis.

The same variety $X_n$ works for $\alpha_i$ real, since we can replace them by rational approximations (say by rounding down to the nearest multiple of $1/d!$) without changing the structure of the canonical ring up to degree $d$.
\end{proof}
\noqed
\end{proof}
\begin{rem}
As $\kk$ varies, the variety $X_n$ is of course always defined by the same integer polynomial equations, reduced mod $\operatorname{char} \kk$, and thus might be thought of more naturally as a scheme over $\ZZ$.
\end{rem}
\section{Stability of relations}
\label{sec:stabrel}
We now arrive at the problem of determining how the relations of $S$ depend on characteristic and point location. Following the approach of Schreyer \cite{Sch}, we consider the relations as calculated by first finding a Gr\"obner basis for the relation ideal $I = \ker \phi$ and then whittling it down to minimality. For the reader's benefit, we review the definition of a Gr\"obner basis while setting up our conventions. Let $g_1, g_2,\ldots, g_N$ be a minimal generating set, ordered in increasing order by degree (we will soon specify the ordering among generators of the same degree). Then equip the ring $R = \kk[x_1,x_2,\ldots,x_m]$ with the \emph{reverse lexicographical ordering} $\prec$ on monomials given by sorting them from least to greatest like words in a dictionary:
\[
  x_1 \prec x_1^4 \prec x_1^2 x_3 \prec x_1 x_2^2 \prec x_2\text{, \quad etc.}
\]
We then define the \emph{initial term} $\inn_\prec f$ of any nonzero element $f \in R$ as its ``largest'' or ``last'' term under this ordering. By definition, a \emph{Gr\"obner basis} for $I$ is a generating set whose initial terms form a minimal basis for $\inn_\prec I$ (the ideal that is the $\kk$-span of initial terms of elements of $I$).
\begin{rem}
Since $I$ is homogeneous with respect to the grading on the $x$'s inherited from the $g$'s, the Gr\"obner basis will be no different if we use the slightly more familiar ``grevlex,'' or graded reverse lexicographical, order in which higher-degree monomials always lead lower ones and the aforementioned ``revlex'' ordering is only applied to monomials of the same degree.
\end{rem}
\begin{thm} \label{thm:stablegro}
  If $n \leq 4$, then the relation ideal of the stable generators $g_1,\ldots, g_N$ found in Theorem \ref{thm:stablegen} (ordered, in each degree, by reverse order of mention in the proof of Lemma \ref{lem:cores}) has a Gr\"obner basis with stable leading terms.
\end{thm}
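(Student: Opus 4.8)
The plan is to follow Schreyer's syzygy method: rather than computing a Gröbner basis by hand, we exhibit one explicitly by writing down S-polynomials of well-chosen generating relations and tracking their leading terms, then argue that these leading terms are forced to be stable by a dimension count. Since $n\le 4$, the proof of Lemma~\ref{lem:cores} shows that every graded piece $S_d$ has a stable monomial basis, obtained by deleting from $\{t_1^r,t_1^{r-1}t_2,\dots,t_2^r\}$ a prescribed set of monomials (or, in the $|\FF|=1$ case, by replacing certain powers of $t_1$ with a single new exponent); this also gives a stable monomial basis $g_1,\dots,g_N$ for $S$ itself as in Theorem~\ref{thm:stablegen}. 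The first step is therefore to fix this data once and for all and to understand, for each product $g_ig_j$, the \emph{reduction} expressing it in the chosen stable basis of $S_{d_i+d_j}$: this expresses $g_ig_j$ as a $\kk$-linear combination of other monomials $g_k\cdot(\text{monomial})$, and the difference is a relation $r_{ij}\in I$. The key claim is that the coefficients in this reduction, and in particular the leading term $\inn_\prec r_{ij}$ in revlex, are stable — the same combinatorial recipe regardless of the $P_i$ and $\operatorname{char}\kk$.

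The second step is the dimension bookkeeping that makes this work. In each degree $d$, the relation space $I_d = \ker(\phi)_d$ has dimension $\dim R_d - \dim S_d$, and both terms are stable (the first obviously, the second by Lemma~\ref{lem:cores}). So it suffices to produce, in each degree, a stable set of elements of $I_d$ whose leading terms are distinct and number exactly $\dim R_d - \dim S_d$; automatically they span $I_d$ and their leading terms span $(\inn_\prec I)_d$, giving a Gröbner basis. The natural candidates are the $r_{ij}$ together with the S-polynomial reductions generated from them; because $R$ is a polynomial ring and revlex is a term order, Buchberger's criterion guarantees that iterating S-polynomial formation terminates, and at each stage the new leading term is again computed by a stable reduction in $S$. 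One has to check that no cancellation of leading terms occurs that would depend on the geometry — but here the revlex order and the homogeneity (each $I_d$ is spanned by the reductions landing in degree $d$) keep the combinatorics rigid, since $\inn_\prec$ of a sum of monomials with nonzero coefficients is determined purely by which monomials appear, not by the coefficient values, and \emph{which} monomials appear in a stable reduction is stable by construction.

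The main obstacle is the middle of that last argument: proving that the reduction of each $g_ig_j$ (and of each subsequent S-polynomial) against the stable basis of $S$ genuinely has \emph{stable support}, i.e. that the set of basis monomials of $S_{d}$ occurring with nonzero coefficient does not jump as the $P_i$ move or the characteristic changes. This is exactly the phenomenon that failed for the naive basis in Examples~\ref{ex:char} and \ref{ex:harm}, and it is where the hypothesis $n\le 4$ must be used: with at most four points, the case analysis in Lemma~\ref{lem:cores} leaves $\FF$ with $|\FF|\le 1$ once $r\ge n-1$, and in the low-$r$ cases the multiplication maps $H^0(O(a))\otimes H^0(O(b))\to H^0(O(a+b))$ interact with the vanishing conditions in a way that can be made basis-independent by choosing the deleted monomials to be powers of $t_1$ and $t_2$ only. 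Concretely, I expect to reduce this to the statement that for the specific monomial bases chosen, the structure constants of the multiplication $S_c\times S_{d-c}\to S_d$ lie in the prime field (indeed in $\ZZ$) and do not specialize badly — which one verifies by expressing everything in the coordinate $t$ on $\PP^1$ and observing that the only relations used are the trivial polynomial identities in $\kk[t]$, valid over $\ZZ$. Once stable support is established, the bound $n\le 4$ is not needed again: Buchberger's algorithm, the term order, and the dimension count do the rest mechanically. I would also remark that the failure at $n=5$ is consistent with Example~\ref{ex:chords}, where the number of relations itself is unstable, so no choice of ordering could rescue the conclusion there.
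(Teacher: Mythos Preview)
Your proposal has a genuine gap at its acknowledged crux: the claim that the structure constants of the multiplication $S_c \times S_{d-c} \to S_d$ (with respect to the stable monomial bases) lie in $\ZZ$ and ``do not specialize badly'' is false. The chosen generators are monomials $u^d t_1^{c_1}\cdots t_n^{c_n}$, and expressing a product of two such monomials in terms of the basis monomials requires repeatedly using the linear relations $t_j = \beta_j t_1 + \gamma_j t_2$ for $j\ge 3$; the resulting coefficients are polynomials in the $\beta_j,\gamma_j$, which encode the cross-ratios of the $P_i$ and can vanish for special configurations or characteristics. So stable support of the reductions $r_{ij}$ is not available for free, and without it your Buchberger-tracking argument does not get off the ground. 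There is a secondary issue as well: even granting stable support of each $r_{ij}$, forming S-polynomials involves subtraction, and non-leading terms may or may not cancel depending on the actual coefficient values, so stability would not obviously propagate through the algorithm.

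The paper's proof takes a different route that avoids looking at any individual relation or running Buchberger at all. It observes that $m_k \in \inn_\prec I$ iff $\dim W_k = \dim W_{k-1}$, where $W_k \subseteq S_d$ is the image of the span of the first $k$ monomials in revlex order; so it suffices to show each $\dim W_k$ is stable. The substance is then two lemmas: first, an induction (using the specific ordering of the $g_i$ inherited from Lemma~\ref{lem:cores}) showing every $W_k$ can be written as $u^d\sum_u H^0(D_u)$ for $\ZZ$-divisors $D_u$ supported on $\{P_i\}$ and satisfying the ``aura'' condition $D_v \le D_u + \sum_i P_i$ whenever $u<v$; second, a tight combinatorial analysis showing that for $n=4$, after minimizing such a sum, at most three terms survive, and in the three-term case the divisors are pinned down (up to symmetry) to a single one-parameter family whose dimension is read off directly. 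This second lemma is where $n\le 4$ is genuinely used---not in the $|\FF|\le 1$ reduction you cite, which already holds for $n\le 5$ and indeed for all $n$ once $r\ge n-1$. Your closing remark is also off by one: Example~\ref{ex:chords} has six points and illustrates the failure of stable \emph{generation} at $n=6$, not of the Gr\"obner basis at $n=5$.
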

\begin{proof}
First note that it is sufficient to prove that $\inn_\prec I$ is stable, for then the leading terms of the Gr\"obner basis are simply the monomials in $\inn_\prec I$ not divisible by any other monomial in $\inn_\prec I$. Let
\[
  m_1 \prec m_2 \prec \cdots \prec m_M
\]
be the monomials of degree $d$ in the $x_i$ and consider the nested sequence of subspaces
\[
  0 = W_0 \subseteq W_1 \subseteq \cdots \subseteq W_M = S_d
\]
where $W_k$ is the $\kk$-span of $m_1,\ldots,m_k$. Note that $\dim W_k - \dim W_{k-1}$ is always either $0$ or $1$ and is $0$ if and only if $m_k \in \inn_\prec I$. Thus it is sufficient to prove that the dimension of each $W_k$ is stable. We begin by describing $W_k$ as a subspace of $S_d = u^d H^0(\floor{dD})$.

\begin{lem}\label{lem:h0sum}
There are $(\ZZ$-$)$divisors $D_1,\ldots,D_\ell$ $(\ell \geq 0)$, supported on $\{P_1,\ldots,P_n\}$, such that
\begin{equation}\label{eq:h0sum}
  u^{-d}W_k = H^0(D_1) + \cdots + H^0(D_\ell)
\end{equation}
and such that
\begin{equation}\label{eq:aura}
  D_v \leq D_u + \sum_i P_i \text{\quad whenever\quad} u < v.
\end{equation}
\end{lem}

\noindent
\begin{minipage}{0.76\textwidth}
\begin{rem}
Pictorially, the monomials in $H^0(O(r))$ form an $(n-1)$-simplex with side $r$ whose vertices are the pure powers $t_i^r$; the subspaces $H^0(D_i) \subseteq u^{-d}S_d$ can be viewed as subsimplices with the same orientation. The condition \eqref{eq:aura} says that the simplex corresponding to $D_v$ lies within the neighborhood formed by expanding each face of the simplex of $D_u$ by one lattice point. An example for $n=3$, $r=7$ is shown. (The overlapping of the simplices is a sign that the expression $\sum_i H^0(D_i)$ is not minimal---compare \eqref{eq:2divs} below.)
\end{rem}
\end{minipage}%
\begin{minipage}{0.24\textwidth}
\centering
{
\psset{unit=0.4cm,linewidth=0.05}
\begin{pspicture}(-0.5,-0.5)(7.5,6.5)
\newcommand{\sqr}{0.866}
\psdots*[dotstyle=*,dotsize=3pt](1,0)(7,0)(4,5.196)(1.5,2.598)(3.5,2.598)(2.5,4.330)(5,1.732)
\psdots*[dotstyle=o,dotsize=3pt]
  (0,0)(2,0)(3,0)(4,0)(5,0)(6,0)
  (0.5,0.866)(1.5,0.866)(2.5,0.866)(3.5,0.866)(4.5,0.866)(5.5,0.866)(6.5,0.866)
  (1,1.732)(2,1.732)(3,1.732)(4,1.732)(6,1.732)
  (2.5,2.598)(4.5,2.598)(5.5,2.598)
  (2,3.464)(3,3.464)(4,3.464)(5,3.464)
  (3.5,4.330)(4.5,4.330)
  (3,5.196)
  (3.5,6.062)
\pspolygon(1,0)(7,0)(4,5.196)
\pspolygon(1.5,2.598)(3.5,2.598)(2.5,4.330)
\rput[bl](5.5,3){$D_1$}
\rput[br](1.8,3.4){$D_2$}
\rput[tr](5.2,1.5){$D_3$}
\end{pspicture}
}
\end{minipage}
\begin{proof}
We will induct on $d$, the base case being $W_0$ written as the empty sum (for all $d$, including $d=0$). Let $m_k = g_{k_1} g_{k_2} \cdots g_{k_s}$ where $k_1 \leq k_2 \leq \cdots \leq k_s$, and let $g_{k_1}$ have degree $e$. The monomials that precede or equal $m_k$ can be divided into three (possibly overlapping) categories:
\begin{enumerate}
  \item All monomials containing a $g_i$ of degree less than $e$;
  \item All monomials containing a $g_i$ of degree $e$ that precedes $g_{k_1}$;
  \item All monomials of the form $g_{k_1} m'$ where $m'$ is a monomial of degree $d - e$ that precedes or equals $g_{k_2} \cdots g_{k_s}$.
\end{enumerate}
Correspondingly we get a three-term decomposition
\begin{equation}\label{eq:wk}
  W_k = \sum_{c < e} S_c S_{d-c} + \sum_{\substack{i < k_1 \\ \deg g_i = e}} g_i S_{d-e} + g_{k_1} W'
\end{equation}
where $W'$ is the space $W_{k'}$ in degree $d - e$ corresponding to the monomial $g_{k_2} \cdots g_{k_s}$. By induction, the last term has an expression $H^0(D'_1) + \cdots + H^0(D'_{\ell'})$ of the desired form (the multiplication by $g_{k_1}$ simply translates each divisor by a degree-0 constant). Also, we recall from the proof of Theorem \ref{thm:stablegen} that each term of the left sum has the form $u^d H^0(E)$ with $E = \floor{dD} - \sum_{i\in A}P_i$. Note that $E + \sum_i P_i \geq \floor{dD}$, so the terms $H^0(E)$ can be placed at the beginning of the desired expression without possibly violating \eqref{eq:aura}.

To finish the proof it is necessary to study the middle term of \eqref{eq:wk}, which in turn depends on the ordering of the generators $g_u, g_{u+1}, \ldots, g_v$ within degree $e$. By examining the proof of Lemma \ref{lem:cores} carefully, we observe that there is a point $P_j$ (in the lemma $j=1$, but in general $j$ changes with $e$) such that
\begin{itemize}
  \item $g_u,\ldots, g_v$ have strictly decreasing order of vanishing at $P_j$;
  \item if $g_i$ has order of vanishing $m$ at $P_j$ (as a section of $\floor{eD}$), then all elements of $S_e$ that vanish to order greater than $m$ (i.e{.} all elements of $u^e H^0(\floor{eD} - (m+1)P_j)$) lie in the subring generated by $g_u,g_{u+1},\ldots,g_{i-1}$ and $S_1,S_2,\ldots,S_{e-1}$.
\end{itemize}
Taking $i = k_1$ in the second condition, we can write
\begin{align*}
  W_k &= \sum_{c < e} S_c S_{d-c} + \sum_{u \leq i < k_1} g_i S_{d-e} + g_{k_1} W' \\
  &= \sum_{c < e} S_c S_{d-c} + u^e H^0(\floor{eD} - (m+1)P_j) S_{d-e} + g_{k_1} W' \\
  &= \sum_{c < e} S_c S_{d-c} + u^d H^0(\floor{eD} + \floor{(d-e)D} - (m+1)P_j) + g_{k_1} W'.
\end{align*}
We finally note that $g_{k_1} W' \subseteq g_{k_1} S_{d-e} \subseteq u^d H^0(\floor{eD} + \floor{(d-e)D} - m P_j)$, so \eqref{eq:aura} will be satisfied if the terms arising inductively from $W'$ come last.
\end{proof}
To prove the theorem, it suffices to show that for $n \leq 4$ (and there is no harm in assuming $n=4$) any space of the form \eqref{eq:h0sum} satisfying \eqref{eq:aura} has stable dimension. We begin by simplifying the expression so that $\ell$ is minimal. This means that no term vanishes (so each $\deg D_i \geq 0$) and also that it is not possible to combine two terms as follows. If $D_u = \sum_i a_i P_i$ and $D_v = \sum_i b_i P_i$ are two divisors, define
\[
  D_u \cap D_v = \sum_i \min\{a_i,b_i\} P_i \quad \text{and} \quad
  D_u \cup D_v = \sum_i \max\{a_i,b_i\} P_i.
\]
Then we have the exact sequence
\begin{equation}\label{eq:2divs}
  0 \to H^0(D_u \cap D_v) \to H^0(D_u) \oplus H^0(D_v) \to H^0(D_u \cup D_v).
\end{equation}
If the rightmost map is surjective, we can replace the two terms $D_u$, $D_v$ with $D_u \cup D_v$ (and place it at the $u$th spot, assuming $u < v$) without violating \eqref{eq:aura}. By a simple dimension count, this always happens if $\deg (D_u \cap D_v) \geq -1$, so we have
\[
  \deg (D_u \cap D_v) \leq -2.
\]
We can derive another piece of information from \eqref{eq:2divs}: $\dim W_k$ is stable when $\ell = 2$. Therefore we can assume $\ell \geq 3$. Let $u < v < w$ be three distinct indices and consider the corresponding divisors $D_u = \sum_i a_i P_i$, $D_v = \sum_i b_i P_i$, $D_w = \sum_i c_i P_i$. We have the conditions
\begin{minipage}[t]{0.22\textwidth}
\begin{gather}
  b_i \leq a_i + 1 \label{eq:ba} \\
  c_i \leq a_i + 1 \label{eq:ca} \\
  c_i \leq b_i + 1 \label{eq:cb}
\end{gather}
\end{minipage}\hfill
\begin{minipage}[t]{0.25\textwidth}
\begin{gather}
  \sum_i a_i \geq 0 \label{eq:sa} \\
  \sum_i b_i \geq 0 \label{eq:sb} \\
  \sum_i c_i \geq 0 \label{eq:sc}
\end{gather}
\end{minipage}\hfill
\begin{minipage}[t]{0.35\textwidth}
\begin{gather}
  \sum_i \min\{a_i,b_i\} \leq -2 \label{eq:mab} \\
  \sum_i \min\{a_i,c_i\} \leq -2 \label{eq:mac} \\
  \sum_i \min\{b_i,c_i\} \leq -2 \label{eq:mbc}.
\end{gather}
\end{minipage}
\begin{lem} \label{lem:slog}
The only integer solutions to \eqref{eq:ba}--\eqref{eq:mbc} when $n=4$ are, up to permuting the $i$'s and adding the same constants $d_i$ $(\sum_i d_i = 0)$ to $a_i$, $b_i$, and $c_i$, given by the following table.
\begin{equation}
\begin{tabular}{c|cccc}
$i$ & $1$ & $2$ & $3$ & $4$ \\ \hline
$a_i$ & $\geq 2$ & $0$ & $-1$ & $-1$ \\
$b_i$ & $0$ & $0$ & $0$ & $0$ \\
$c_i$ & $1$ & $1$ & $0$ & $-2$
\end{tabular}
\label{tab}
\end{equation}
\end{lem}
\begin{proof}
We begin by adding together the inequalities \eqref{eq:sb} and \eqref{eq:mab} to get
\begin{align*}
  2 &\leq \sum_i \big(b_i - \min\{a_i,b_i\}\big) \\
  &= \sum_i \max\{b_i - a_i, 0\}.
\end{align*}
Similarly, we add together \eqref{eq:sc} and \eqref{eq:mbc} to get
\[
  2 \leq \sum_i \max\{c_i - b_i, 0\}.
\]
Each term of these two sums is either $0$ and $1$, and both cannot be $1$ for the same $i$ by \eqref{eq:ca}. So equality holds, and we may permute indices such that
\[
  c_1 - b_1 = c_2 - b_2 = b_3 - a_3 = b_4 - a_4 = 1.
\]
Also \eqref{eq:sb} is an equality so we may normalize the $b_i$ to be all $0$, and now all but $a_1, a_2, c_3, c_4$ have the values indicated in \eqref{tab}. We have $a_1 + a_2 \geq 2$ (by \eqref{eq:sa}) and may assume $a_1 \geq 1$; we have $c_3 + c_4 = -2$ (now that \eqref{eq:sc} is an equality) and may assume $c_3 \geq -1$. The only condition still unused is \eqref{eq:mac}:
\begin{align*}
-2 &\geq \sum_i {\min\{a_i,c_i\}} \\
   &\geq 1 + a_2 + (-1) + c_4 \\
   &= a_2 - 2 - c_3 \\ 
   &\geq (b_2 - 1) - 2 - (b_3 - 1) = -2.
\end{align*}
Equality holds everywhere, and \eqref{tab} quickly follows.
\end{proof}
Returning to the theorem, we assume first that $\ell \geq 4$ and focus on the first four divisors $D_1, D_2, D_3, D_4$. The difference $D_2 - D_4$ can be calculated in two ways: applying Lemma \ref{lem:slog} to $D_1,D_2,D_4$ shows that
\[
  D_2 - D_4 = \sum_i (b_i - c_i)P_i = -P_{\sigma(1)} - P_{\sigma(2)} + 2P_{\sigma(4)}
\]
for some permutation $\sigma$; using $D_2,D_3,D_4$ instead gives
\[
  D_2 - D_4 = \sum_i (a_i - c_i)P_i = (a_1 - 1) P_{\tau(1)} - P_{\tau(2)} - P_{\tau(3)} + P_{\tau(4)}
\]
for some permutation $\tau$ and integer $a_1 \geq 2$. Because the former value has a zero coefficient and the latter does not, we have a contradiction.

So $\ell = 3$, and it suffices to show that $H^0(D_1) + H^0(D_2) + H^0(D_3)$ has stable dimension when $D_1$, $D_2$, $D_3$ exactly equal the divisors in the table \eqref{tab}. Pick a basis for each $H^0(D_i)$ consisting of functions with distinct orders at $P_1$. These orders will be $0$ for $D_2$, $1$ for $D_3$, and $2,3,\ldots,a_1$ for $D_1$: all distinct, implying that
\[
  \dim\Big(\sum_i H^0(D_i)\Big) = \sum_i \dim H^0(D_i) = a_1 + 1
\]
is stable.
\end{proof}
\section{Open questions}
\label{sec:open}
This work leaves many directions open for future research.
\begin{enumerate}
\item The outstanding question is how to compute, or prove stability of, the \emph{minimal} relations among the generators. In contrast to the Gr\"obner basis, the number and degrees of these depend on neither a monomial ordering nor a generating set and thus are invariants of the divisor $D$. For $n\leq 2$ the relations \eqref{eq:2ptrel} are obviously minimal simply because of their distinctive quadratic leading terms, but even for a three-point example as simple as $D = -\frac{1}{3}P_1 + \frac{1}{2}P_2 + \frac{1}{2}P_3$ the Gr\"obner basis need not be minimal. The dimension-counting method used to prove Theorem \ref{thm:stablegro} is not well adapted to determining what terms the relations contain, let alone whether they can be used to eliminate one another. The following conjectures are plausible on the basis of numerical data:
\begin{conj}\label{conj:min}
\begin{enumerate}[$(a)$]
  \item[]
  \item For $n \leq 5$, the degrees of the minimal relations are stable.
  \item For all $n$, the degrees of the minimal relations are independent of the points $P_i$ when $(P_1,\ldots,P_n)$ lies outside the locus $X_n$ of exceptional generation in Theorem \ref{thm:xgen}$(b)$.
\end{enumerate}
\end{conj}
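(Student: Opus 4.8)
The plan is to recast both parts as the single statement that the graded Betti number $\beta_{1,j}(S_D)$ --- the number of minimal relations in degree $j$ --- is locally constant in the same situations in which the \emph{generators} are: everywhere for $n\le 5$ by Theorem~\ref{thm:stablegen}, and off $X_n$ for general $n$ by Theorem~\ref{thm:xgen}$(b)$. Thus in both cases the conjecture reduces to the inclusion ``relation jump locus $\subseteq$ generator jump locus''. Fixing the stable generating monomials of Theorem~\ref{thm:stablegen}/\ref{thm:xgen}$(b)$ gives a single polynomial ring $R=\kk[x_1,\dots,x_N]$ surjecting onto $S_D$ with relation ideal $I$. Since $\dim_\kk S_d=\max\{0,\deg\floor{dD}+1\}$ depends only on the $\alpha_i$, the rings $S_D$ form a flat family $\mathcal S$ over the space $B$ of admissible point-tuples --- over $(\PP^1)^n\setminus\Delta$ ($\Delta$ the big diagonal) for a fixed field, or over $(\PP^1_\ZZ)^n\setminus\Delta$ to absorb the characteristic --- and $\mathcal I=\ker(\mathcal O_B[x_1,\dots,x_N]\to\mathcal S)$ is again $B$-flat. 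As $B$ is irreducible, upper semicontinuity of Betti numbers gives for free that $\beta_{1,j}$ is constant on a dense open $B_0\subseteq B$ and can only jump upward on the complement (in particular, for $n\le 3$ part $(b)$ is immediate, since $(\PP^1)^n$ minus its diagonals is then a single $\operatorname{Aut}\PP^1$-orbit); the work is to enlarge $B_0$ to $B\setminus X_n$ (for part $(a)$, to all of $B$).

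To locate the jump locus I would split off the toric part via the exact sequence $0\to\ker\chi\to I\to\ker\psi\to 0$ of Section~\ref{sec:bounds}. The ideal $\ker\chi$ depends only on the semigroup $\Sigma$, hence only on the $\alpha_i$, so its minimal resolution is stable; feeding the sequence into the long exact $\operatorname{Tor}$ sequence reduces the computation of $I/\mathfrak mI$ to the minimal generators of $\ker\psi$ (as a module over $\kk[\Sigma]$, equivalently over $R$) together with the connecting map to $\kk\otimes_R\ker\chi$. From the relation-bound argument, $\ker\psi$ is spanned by the elements $(y_j-\beta_jy_1-\gamma_jy_2)\,u^dy_1^{b_1}\cdots y_n^{b_n}$ with $(d,b)$ running over the lattice points of the cone $C'$, and these depend on the $P_i$ only through the coordinates $\beta_j,\gamma_j$ of $t_j$ in the basis $(t_1,t_2)$. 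Which lattice points occur, and which reductions among the associated relations are available, is a purely combinatorial skeleton that is stable; what is not a priori stable is, for a fixed $(d,b)$, how many of the relations indexed by $j=3,\dots,n$ remain minimal generators, and this is governed by the rank of an explicit matrix assembled from the $\beta_j,\gamma_j$. The crux would be to prove that every locus on which such a rank drops is cut out by equations supported on $X_n$ and on no further prime --- precisely the mechanism visible in Examples~\ref{ex:char}--\ref{ex:chords}, where minimality is lost exactly when $\operatorname{char}=2$, when $\lambda=-1$, or when three chords are concurrent, each of which is a defining condition of the relevant $X_n$.

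For part $(a)$ one must in addition rule out arithmetic degenerations, and here I would upgrade the dimension count of Theorem~\ref{thm:stablegro} to a bookkeeping that follows the relations term by term: carry out the reduction of the stable Gr\"obner basis to a minimal generating set and show, using the $n\le 5$ analogue of the rigidity classification of Lemma~\ref{lem:slog} --- which already, for $n=4$, pins down the finitely many overlapping-simplex configurations $H^0(D_u)$ that can arise in \eqref{eq:h0sum} --- that every cancellation coefficient produced is, up to a unit, a product of differences $P_a-P_b$. Such a coefficient is neither a zero-divisor nor zero modulo $p$ once the configuration avoids $X_n$, which would give stability in every characteristic; the appearance of the bound $n\le 5$ should be a by-product of this classification, much as the bound in Lemma~\ref{lem:cores} fell out of $|A\cup B\cup C|\ge 3(r+2)-3r=6$.

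The principal obstacle is exactly the one flagged in Section~\ref{sec:open}: a dimension count cannot distinguish a minimal generating set from a merely Gr\"obner one, and semicontinuity delivers only generic constancy rather than the constancy off $X_n$ being asserted, so one genuinely needs a grip on the \emph{terms} of the first syzygies. Turning the rank-drop heuristic above into a theorem --- proving that every cancellation among Gr\"obner basis elements is governed by an equation on $X_n$, uniformly in the $\alpha_i$ and in the characteristic --- is where essentially all the difficulty lies; the natural vehicles would be a combinatorial model for $\operatorname{Tor}_1$ parallel to the $(e_i,\lambda_j)$-description of $\Sigma$ used in Section~\ref{sec:bounds}, or a Koszul-cohomology computation exploiting that $\Proj S=\PP^1$ is a curve. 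I would expect $(b)$ to be meaningfully easier than $(a)$, since for $(b)$ one may normalize three of the points and is spared the passage to positive characteristic, whereas $(a)$ forces the small-characteristic behavior to be confronted head on.
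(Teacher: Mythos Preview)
The statement you are addressing is Conjecture~\ref{conj:min}, and the paper offers no proof of it: it appears in Section~\ref{sec:open} (``Open questions'') explicitly as a conjecture ``plausible on the basis of numerical data,'' with the remark that ``the dimension-counting method used to prove Theorem~\ref{thm:stablegro} is not well adapted to determining what terms the relations contain, let alone whether they can be used to eliminate one another.'' There is therefore no paper proof to compare your proposal against.

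Your write-up is honest about this: what you have supplied is a strategy, not a proof, and you yourself identify the gap in your final paragraph --- semicontinuity gives only generic constancy, and upgrading that to constancy off $X_n$ (or everywhere for $n\le5$) requires control over the \emph{terms} of the syzygies, which neither the paper's methods nor your outline currently achieve. The decomposition via $0\to\ker\chi\to I\to\ker\psi\to 0$ and the flat-family/Betti-semicontinuity framing are reasonable scaffolding, but the step ``prove that every locus on which such a rank drops is cut out by equations supported on $X_n$'' is doing all the work and is left as a heuristic. In particular, your claim that for $n\le3$ part~$(b)$ is immediate because $(\PP^1)^3\setminus\Delta$ is a single $\mathrm{Aut}\,\PP^1$-orbit handles the point locations but not the characteristic, which part~$(a)$ also quantifies over; and the hoped-for ``$n\le5$ analogue of Lemma~\ref{lem:slog}'' is not something the paper provides --- Theorem~\ref{thm:stablegro} is stated and proved only for $n\le4$, precisely because the combinatorial rigidity of Lemma~\ref{lem:slog} breaks at $n=5$. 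So as a proof this does not close; as a research plan it is in line with what the paper suggests but does not go beyond it.
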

\item The variety $X_n$ in Theorem \ref{thm:xgen}(b) remains mysterious. When $n \leq 5$, Theorem \ref{thm:xgen}(b) tells us that $X_n$ is nothing more than the ``fat diagonal'' where two of the $P_i$ coincide. We conjecture that $X_6$ consists only of the fat diagonal and the locus on which Example \ref{ex:chords} exhibits exceptional behavior. As $n \to \infty$, how many components does $X_n$ have? What are their degrees?
\item Throughout this paper, we have been using special coordinates $u^d t_1^{c_1} \cdots t_n^{c_n}$ for our generators. Following \cite{VZB}, it is equally possible to use generic elements in the relevant degrees. What does the generic initial ideal of relations look like? Is its corresponding Gr\"obner basis more likely to be minimal than for special coordinates?
\item If the $\alpha_i$ are real numbers, the ring $S_D$ will no longer be finitely generated and we can ask about the density of generators and relations as $d \to \infty$. For $n \leq 2$ we are led into Khinchin's theory of the asymptotics of random continued fractions; for $n \geq 3$ the form of the answer is far from clear.
\item A natural extension is to curves $X \neq \PP^1$. Owing to our constant use of the fact that any line bundle on $\PP^1$ is isomorphic to some $O(r)$, we expect the Picard group and Jacobian of $X$ to take center stage. Two stark differences from the $\PP^1$ case are worthy of note:
\begin{itemize}
  \item Already for $D = P$, the ring $S_D$ ceases to be stable: its graded dimensions encode whether $P$ is a Weierstrass point and, if so, what its Weierstrass semigroup is.
  \item It is generally impossible to bound the degrees of the generators in terms of the $\alpha_i$ alone (as in Theorem \ref{thm:bounds}). For instance, if $X$ is an elliptic curve and $D = P - Q$ is a degree-$0$ divisor, then $S_d$ has dimension $1$ or $0$ according to whether $d(P-Q)$ is zero in the elliptic group law: thus $S$ can be made to have a generator in degree $d$ by making $P-Q$ a torsion point of exact order $d$.
\end{itemize}
Likewise, extensions to higher-dimensional varieties such as $\PP^m$ are natural.
\item A step beyond generators and relations is the computation of \emph{syzygies:} considering the relation ideal $I$ as a module over the free generator algebra $F = \kk[x_1,\ldots,x_n]$, to take the minimal free resolution
\[
  \cdots \to F_2 \to F_1 \to I.
\]
Much progress has been made in recent years on \emph{Green's conjecture}, which relates these syzygy modules to the geometry of the curve in the case that $D = K$ is a canonical divisor. Here one could hope for a bound similar to Theorem \ref{thm:bounds} on the length and term dimensions (``Betti numbers'') in the resolution.
\end{enumerate}

\section{Acknowledgments}
This research was performed at the Emory University Research Experience for Undergraduates (REU), an NSF-sponsored program, under the mentorship of Ken Ono and David Zureick-Brown. I would also like to thank David Yang for helpful conversations.

\bibliography{canqdiv.bib}

\begin{thebibliography}{Sch91}

\bibitem[Mil75]{Mil}
John Milnor.
\newblock On the {$3$}-dimensional {B}rieskorn manifolds {$M(p,q,r)$}.
\newblock In {\em Knots, groups, and 3-manifolds ({P}apers dedicated to the
  memory of {R}. {H}. {F}ox)}, pages 175--225. Ann. of Math. Studies, No. 84.
  Princeton Univ. Press, Princeton, N. J., 1975.

\bibitem[Sch91]{Sch}
Frank-Olaf Schreyer.
\newblock A standard basis approach to syzygies of canonical curves.
\newblock {\em J. Reine Angew. Math.}, 421:83--123, 1991.

\bibitem[SD73]{SD}
B.~Saint-Donat.
\newblock On {P}etri's analysis of the linear system of quadrics through a
  canonical curve.
\newblock {\em Math. Ann.}, 206:157--175, 1973.

\bibitem[VZB]{VZB}
John Voight and David Zureick-Brown.
\newblock The canonical ring of a stacky curve.
\newblock {\em In preparation}.

\end{thebibliography}
\bibliographystyle{alpha}
\end{document}